\newcommand{\N}{\mathbb{N}}
\newcommand{\NN}{\mathbb{N}}
\newcommand{\R}{\mathbb{R}}
\newcommand{\RR}{\mathbb{R}}
\newcommand{\cali}{\mathcal}
\newtheorem{theorem}{Theorem}
\newtheorem{proposition}[theorem]{Proposition}
\newtheorem{lemma}[theorem]{Lemma}
\newtheorem{corollary}[theorem]{Corollary}
\newtheorem{definition}{Definition}
\newtheorem{remark}[theorem]{Remark}
\newcommand{\f}{\varphi}
\newenvironment{proof}[1][]{\noindent {\bf Proof #1:\;}}{\hfill $\Box$\\}
\DeclareMathOperator{\crit}{crit}
\DeclareMathOperator{\argmin}{argmin}
\DeclareMathOperator{\dist}{dist}
\DeclareMathOperator{\dom}{dom}
\DeclareMathOperator{\prox}{prox}
\DeclareMathOperator{\Graph}{graph}
\begin{document}

\title{Extragradient Method in Optimization: Convergence and Complexity}


\author {Trong Phong Nguyen\footnote{TSE (GREMAQ, Universit\'{e} Toulouse I Capitole), Manufacture des Tabacs, 21 all\'{e}e de Brienne, 31015 Toulouse, Cedex 06, France. Email: trong-phong.nguyen@ut-capitole.fr and Centro de Modelamiento Matemático (UMI 2807, CNRS), Universidad de Chile, Beauchef 851, Casilla 170-3, Santiago 3, Chile} \and Edouard Pauwels \footnote{IRIT-UPS, 118 route de Narbonne, 31062 Toulouse, France. Email: edouard.pauwels@irit.fr (Edouard Pauwels)} \and  \'Emile Richard \footnote{Amazon,	ricemile@amazon.com} \and Bruce W. Suter\footnote{Air Force. Research Laboratory / RITB, Rome, NY, United States of America. E-mail: bruce.suter@us.af.mil}}


\maketitle

\begin{abstract}
We consider the extragradient method to minimize the sum of two functions, the first one being smooth  and the second being  convex. Under the Kurdyka-\L ojasiewicz assumption, we prove that the sequence produced by the extragradient method converges to a critical point of the problem and has finite length. The analysis is extended to the case when both functions are convex. We provide, in this case,  a sublinear convergence rate, as for gradient-based methods. Furthermore, we show that the recent \textit{small-prox} complexity result can be applied to this method. Considering the extragradient method is an occasion to describe an exact line search scheme for proximal decomposition methods. We provide details for the implementation of this scheme for the one norm regularized least squares problem and demonstrate numerical results which suggest that combining nonaccelerated methods with exact line search can be a competitive choice.
\end{abstract}
{\bf Keywords} Extragradient, descent method, forward-backward splitting method, Kurdyka-\L ojasiewicz inequality, complexity, first order method, $\ell^1$-regularized least squares.
\section{Introduction}
We introduce a new optimization method for approximating a global minimum of a composite objective function, i.e., a function formed as the sum of a smooth function and a simple nonsmooth convex function.

This class of problems is rich enough to encompass many smooth/nonsmooth, convex/nonconvex optimization problems considered in practice. Applications can be found in various fields throughout science and engineering, including signal/image processing \cite{CP} and machine learning \cite{tibshirani1996regression}. Successful algorithms for these types of problems include for example fast iterative shrinkage-thresholding algorithm (FISTA) method \cite{BT08} and forward-backward splitting method \cite{Waj}. The goal of this paper is to investigate to which extent extragradient method can be used to tackle similar problems.

The extragradient method was initially proposed by Korpelevich \cite{korpe} and  it has become a classical method for solving variational inequality problems. For optimization problems, this method generates a sequence of estimates based on two projected gradient steps at each iteration.

After Korpelevich's work, a number of authors extended his extragradient method for variational inequality problems (for example, see \cite{censor,Svai}). In the context of convex constrained optimization, \cite{LuoTseng} considered the performances of the extragradient method under error bounds assumptions. In this setting, Luo and Tseng have described asymptotic linear convergence of the extragradient method applied to constrained problems. To our knowledge, this is the only attempt to analyse the method in an optimization setting.

A  distinguishing feature of the extragradient method is its use of an additional projected gradient step, which can be seen as a guide during the optimization process. Intuitively, this additional iteration allows us to  \emph{examine} the geometry of the problem and take into account its curvature information, one of the most important bottlenecks for first order methods. Motivated by this observation, our goal is to extend and understand further the extragradient method in the specific setting of optimization. Apart from the work of Luo and Tseng, the literature on this topic is quite scarce. Moreover, the nonconvex case is not considered at all.

We  combined the work of \cite{korpe,LuoTseng} and some recent extensions for first-order descent methods, (see \cite{attbol,AttBolSva,BST,eb&kl}), to propose the extented extragradient method (EEG for short) to tackle the problem of minimizing a composite objective function. The classical extragradient method relies on orthogonal projections. We extend it by considering more general nonsmooth functions, and using proximal gradient steps at each iteration. An important challenge in this context is to balance the magnitude of  the two associated parameters to maintain desirable convergence properties. We devise conditions which allow to prove convergence of this method in the nonconvex case. In addition, we describe two different rates of convergence in the convex setting.

Following \cite{attbol,AttBolSva,BST,eb&kl} we heavily rely on the Kurdyka-\L ojasiewicz (KL for short) inequality to study the nonconvex setting. The KL inequality \cite{Loja63,Kur98} has a long history in convergence analysis and smooth optimization. Recent generalizations in the seminal works \cite{BolDanLew1,BolDanLewShi07} have shown the important versatility of this approach as the inequality holds true for the vast majority of models encountered in practice, including nonsmooth and extended valued functions. This opened the possibility to devise general and abstract convergence results for first order methods \cite{AttBolSva,BST}, which constitute an important ingredient of our analysis. Based on this approach, we derive a general convergence result for the proposed EEG method.

In the convex case, we  focus on global convergence rates. We first describe a sublinear convergence rate in terms of objective function. This is related to classical results from the analysis of first order methods in convex optimization, see for example the analysis of forward-backward splitting method in \cite{BT08}. Furthermore, we show that the \textit{small-prox} result of \cite{eb&kl} also applies to EEG method which echoes the error bound framework of Luo and Tseng \cite{LuoTseng} and opens the door to more refined complexity results when further properties of the objective function are available.

As already mentioned, a distinguished aspect of the extragradient method is its use of an additional proximal gradient step at each iteration. The intuition behind this mechanism is the incorporation of curvature information  into the optimization process. It is expected that one of the effects of this additional step is to allow larger step sizes. With this in mind, we describe an exact line search variant of the method. Although computing exact line search is a nonconvex problem, potentially hard in the general case, we describe an active set method to tackle it for the specific and very popular case of the one norm regularized least squares problem (also known as the least absolute shrinkage and selection operator or LASSO). In this setting the computational overhead of exact line search is approximately equal to that of a gradient computation (discarding additional logarithmic terms).

On the practical side, we compare the performance of the proposed EEG method (and its line search variant) to those of FISTA and forward-backward splitting methods on the LASSO problem. The numerical results suggest that EEG combined with exact line search, constitutes a promising alternative which does not suffer too much from ill conditioning.
\paragraph{Structure of the paper.}
Section \ref{sec2}  introduces the problem and our main assumptions. We also recall important definitions and  notations which will be used throughout the text. Section \ref{sec3} contains the main convergence results of this paper. More precisely, in subsection \ref{subsec33}, we present the convergence and finite length property under the KL assumption in the nonconvex case. Subsection \ref{subsec34}, contains both a proof of sublinear convergence rate and the application of the \textit{small-prox} result for EEG method leading to improved complexity analysis under the KL assumption. Section \ref{sec4} describes exact line search for proximal gradient steps in the context of one norm regularized least squares and results from numerical experiments.

\section{Optimization Setting and Some Preliminaries}\label{sec2}
\subsection{Optimization Setting}
We are interested in solving minimization problems of the form
\begin{equation*}\label{P}\tag*{({\bf  P})}
\min_{x\in \R^n} \{F(x):=f(x)+g(x)\},
\end{equation*}
where $f,\, g$ are extended value functions from $\R^n$ to $\left]-\infty, +\infty\right]$. We make the following standing assumptions:
\begin{itemize}
	\item $\argmin F \ne \emptyset,$ and we note $F^*:=\min_{x\in\R^n} F(x)$.
	\item $g$ is a lower semi-continuous, convex, proper function.
	\item  $f$ is differentiable with $L$-Lipschitz continuous gradient, where $L>0$.
\end{itemize}
\subsection{ Nonsmooth Analysis}\label{nonsmooth}
In this subsection, we recall the definitions, notations and some well-known results from nonsmooth analysis which are going to be used throughout the paper. We will use notations from \cite{Rockafellar} (see also \cite{BauCom}). Let $h\colon\R^n\to\left]-\infty, +\infty\right]$ be a proper, lower-semicontinuous function. For each $x\in \dom h$, the Fr\'echet subdifferential of $h$ at $x$, written $\hat{\partial} h(x)$, is the set of vectors $u\in \R^n$ which satisfy
$$\liminf_{y\to x}\frac{h(y)-h(x)-\langle u, y-x\rangle}{\|x-y\|}\geq 0.$$
When $x\notin \dom h$, we set $\hat{\partial} h(x):=\emptyset$. We will use the following set
$$\Graph (\hat{\partial} h):= \left\{(x, u)\in \R^n \times \R^n\colon\:u \in \hat{\partial} h(x)  \right\}.$$
The (limiting) subdifferential of $h$ at $x\in \dom h$ is defined by the following closure process
$$\partial h(x):=\left\{u\in \R^n\colon \exists \left(x_m, u_m\right)_{m \in \N} \in \Graph (\hat{\partial} h)^\N, \, x_m\underset{m \to \infty}{\rightarrow} x,\,h(x_m)\underset{m \to \infty}{\rightarrow} h(x),\,u_m\underset{m \to \infty}{\rightarrow} u  \right\}.$$
$\Graph (\partial h)$ is defined similarly as $\Graph (\hat{\partial} h)$.
When $h$ is convex, the above definition coincides with the usual notion of subdifferential in convex analysis
$$\partial h(x):=\{u\in \R^n\colon\, h(y)\ge h(x)+\langle u,y-x\rangle\hbox{ for all }y\in \R^n\}.$$
Independently, from the definition, when  $h$ is smooth at $x$ then the subdifferential is a singleton, $\partial h(x)=\left\lbrace \nabla h(x)\right\rbrace$.\\

We can deduce from its definition the following closedness property of the subdifferential: if a sequence $(x_m,u_m)_{m\in \N} \in \Graph (\partial h)^\N$, converges to $(x,u)$, and $h(x_m)$ converges to $h(x)$ then $u\in \partial h(x)$. The set $\crit h:=\{x\in \R^n\colon 0\in \partial h(x)\}$ is called the set of critical points of $h$. In this nonsmooth context, Fermat's rule remains unchanged: A necessary condition for $x$ to be local minimizer of $h$ is that $x\in \crit h$ \cite[Theorem 10.1]{Rockafellar}.

Under our standing assumption, $f$ is a smooth function and we have subdifferential sum rule \cite[Exercise 10.10]{Rockafellar}
\begin{equation}\label{suru}
\partial (f+h)(x)=\nabla f(x)+\partial h(x).
\end{equation}
We recall a well known important property of smooth functions which have $L$-Lipschitz continuous gradient, (see \cite[Lemma 1.2.3]{nes}).
\begin{lemma}[Descent Lemma]\label{lem1}
	For any $x,y \in \R^n$, we have
	$$f(y)\leq f(x) +\langle y-x, \nabla f(x)\rangle +\frac{L}{2}\|x-y\|^2.$$
\end{lemma}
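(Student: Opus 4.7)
The plan is to prove the Descent Lemma by the standard fundamental theorem of calculus argument, exploiting that $\nabla f$ is globally $L$-Lipschitz. The target bound compares $f(y)-f(x)$ to its first-order Taylor approximation at $x$, so the natural strategy is to reconstruct $f(y)-f(x)$ as an integral of $\nabla f$ along the segment $[x,y]$ and then control the discrepancy between $\nabla f$ along the segment and $\nabla f(x)$.

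Concretely, first I would introduce the one-dimensional auxiliary function $\varphi(t):=f(x+t(y-x))$ on $[0,1]$. Since $f$ is differentiable everywhere, $\varphi$ is differentiable with $\varphi'(t)=\langle\nabla f(x+t(y-x)),y-x\rangle$, and by the fundamental theorem of calculus
\[
f(y)-f(x) \;=\; \varphi(1)-\varphi(0) \;=\; \int_0^1 \langle \nabla f(x+t(y-x)),\,y-x\rangle\,dt.
\]
Next I would rewrite the right-hand side by adding and subtracting the fixed linear term $\langle \nabla f(x),y-x\rangle$, which integrates to itself over $[0,1]$, giving
\[
f(y)-f(x) \;=\; \langle \nabla f(x),\,y-x\rangle \;+\; \int_0^1 \langle \nabla f(x+t(y-x))-\nabla f(x),\,y-x\rangle\,dt.
\]

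Then I would bound the remaining integral by Cauchy--Schwarz followed by the $L$-Lipschitz assumption on $\nabla f$: the integrand is at most $\|\nabla f(x+t(y-x))-\nabla f(x)\|\cdot\|y-x\|\le L\,t\,\|y-x\|^2$. Integrating $Lt\|y-x\|^2$ over $[0,1]$ yields exactly $\tfrac{L}{2}\|y-x\|^2$, which is the desired upper bound. Rearranging recovers the stated inequality, with the symmetry of $\|x-y\|^2=\|y-x\|^2$ matching the form in the statement.

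There is no real obstacle here: the argument only needs differentiability of $f$ on the segment (implied by differentiability on $\R^n$) to justify applying the fundamental theorem of calculus to $\varphi$, and the Lipschitz hypothesis to estimate the integrand. The only mild care is to note that $t\mapsto \nabla f(x+t(y-x))$ is continuous (hence the integrals are well-defined Riemann integrals), which is immediate from $L$-Lipschitzness of $\nabla f$.
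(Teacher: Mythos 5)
Your proof is correct and complete: the paper does not prove this lemma itself but cites it from Nesterov's book, and your fundamental-theorem-of-calculus argument with the Cauchy--Schwarz and Lipschitz estimates is exactly the standard proof given in that reference. Nothing to add.
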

\noindent

For the rest of this paragraph, we suppose that $h$ is a convex function. Given $x\in \R^n$ and $t>0$, the proximal operator associated to $h$, which we denote by $\prox_{th}(x)$, is defined as the unique minimizer of function $\displaystyle y\longmapsto\ h(y)+\frac{1}{2t}\|y-x\|^2$, i.e:
$$\prox_{th}(x):=\argmin_{y\in \R^n} \left\lbrace h(y)+\frac{1}{2t}\|y-x\|^2\right\rbrace.$$
Using Fermat's Rule, $\prox_{th}(x)$ is characterized as the unique solution of the inclusion
$$\frac{x-\prox_{th}(x)}{t}\in \partial h\left(\prox_{th}(x)\right).$$
We recall that if $h$ is convex, then $\prox_{th}$ is nonexpansive, that is Lipschitz continuous with constant $1$ (see \cite[Proposition 12.27]{BauCom}). As an illustration, let $C\subset \R^n$ be a closed, convex and nonempty set, then $\prox_{i_C}$ is the orthogonal projection operator onto $C$.
The following property of the $\prox$ mapping will be used in the analysis, (see \cite[Lemma 1.4]{BT08}).
\begin{lemma}{\label{proximal}}
	Let $u\in \R^n,\,t>0$, and $v=\prox_{th}\left( u\right)$, then
	$$h(w)-h(v)\geq \frac{1}{2t}\left(\|u-v\|^2+\|w-v\|^2-\|u-w\|^2\right), \forall w\in \R^n .$$
\end{lemma}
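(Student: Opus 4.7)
The plan is to combine the subdifferential characterization of the proximal map with the subgradient inequality, and then rewrite the resulting inner product via a polarization identity.

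First I would start from the fact, recalled just before the statement, that $v=\prox_{th}(u)$ is equivalent to the inclusion
\[
\frac{u-v}{t}\in\partial h(v).
\]
Since $h$ is convex, the limiting subdifferential coincides with the convex subdifferential, so the subgradient inequality yields, for every $w\in\R^n$,
\[
h(w)\geq h(v)+\left\langle \frac{u-v}{t},\,w-v\right\rangle,
\]
i.e.\ $h(w)-h(v)\geq \tfrac{1}{t}\langle u-v,\,w-v\rangle$.

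Second, I would turn the inner product into the norm expression on the right-hand side of the lemma using the polarization identity
\[
\|u-w\|^2=\|(u-v)-(w-v)\|^2=\|u-v\|^2+\|w-v\|^2-2\langle u-v,\,w-v\rangle,
\]
so that $\langle u-v,\,w-v\rangle=\tfrac12\bigl(\|u-v\|^2+\|w-v\|^2-\|u-w\|^2\bigr)$. Substituting this into the subgradient inequality gives exactly the desired inequality. The argument is elementary; the only subtle point is invoking the equivalence between the proximal definition and the subdifferential inclusion, which is justified by Fermat's rule applied to the strongly convex function $y\mapsto h(y)+\tfrac{1}{2t}\|y-u\|^2$, combined with the sum rule \eqref{suru}. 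There is no real obstacle here — this is a direct two-step manipulation — so I would keep the proof to just these few lines.
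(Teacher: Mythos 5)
Your proof is correct and is essentially the standard argument: the paper itself does not prove this lemma but only cites it as \cite[Lemma 1.4]{BT08}, and your two steps (the characterization $\frac{u-v}{t}\in\partial h(v)$, already recalled in the paper via Fermat's rule, followed by the subgradient inequality and the polarization identity) fill in that citation correctly. No gaps.
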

\subsection{Nonsmooth Kurdyka-\L ojasiewicz Inequality}
In this subsection, we present the nonsmooth Kurdyka-\L ojasiewicz inequality introduced in \cite{BolDanLew1} (see also \cite{BolDanLewShi07,BolDanLeyMaz}, and the fundamental works \cite{Loja63,Kur98}). We note $\left[h<\mu\right]:=\{x\in \R^n\colon\:h(x)<\mu\}$ and $\left[\eta<h<\mu\right]:=\{x\in \R^n\colon\:\eta<h(x)<\mu\}$. Let $r_0>0$ and set
$$\cali K(r_0):=\left\lbrace\f \in C^0\left(\left[0,r_0\right[\right)\cap C^1\left(\left]0,r_0\right[\right),\, \f(0)=0,\, \f\text{ is concave and }\f'>0 \right\rbrace.$$
\begin{definition}
The function $h$ satisfies the {\em Kurdyka-\L ojasiewicz {\rm (KL)} inequality} (or has the KL {\em property}) locally at $\bar x\in \dom h$ if there exist $r_0>0$, $\f\in \cali K(r_0)$ and a neighborhood $U(\bar x)$ such that
\begin{equation}\label{defKL}
\f'\left(h(x)-h(\bar x)\right)\,\dist\left(0,\partial h(x)\right)\geq 1
\end{equation}
for all $x\in U(\bar x) \cap \left[h(\bar x)<h(x)<h(\bar x)+r_0\right]$. We say that $\f$ is a {\em desingularizing function} for $h$ at $\bar x$. The function $h$ has the KL property on $S$ if it does so at each point of $S$.
\end{definition}
When $h$ is smooth and $h(\bar x)=0$ then (\ref{defKL}) can be rewritten as
$$ \|\nabla(\f \circ h)\| \geq 1, \, \forall x\in U(\bar x) \cap \left[0<h(x)<r_0\right].$$
This inequality may be interpreted as follows: The function $h$ can be made sharp locally by a reparameterization of its values through a function $\f \in \cali K(r_0)$ for some $r_0 > 0$.

The KL inequality is obviously satisfied at any noncritical point $\bar x\in \dom h$ and will thus be useful only for critical points, $\bar x\in \crit h$. The {\em \L ojasiewicz gradient inequality} corresponds to the case when $\f(s)=c s^{1-\theta}$ for some $c>0$ and $\theta\in \left[0,1\right[$. The class of functions which satisfy KL inequality is extremely vast. Typical KL functions are semi-algebraic functions, but there exists many extensions, (see \cite{BolDanLew1}).

If $h$ has the KL property and admits the same desingularizing function $\varphi$ at {\em every point}, then we say that $\varphi$ is a {\em global} desingularizing function for $f$. The following lemma is given in \cite[Lemma 6]{BST}.
\begin{lemma}[Uniformized KL property]\label{KLunif}
	Let $\Omega$ be a compact set and let $h: \R^n\rightarrow \left]-\infty, \infty\right]$ be a proper and lower semicontinuous function. We assume that $h$ is constant on $\Omega$ and satisfies the KL property at each point of $\Omega$. Then, there exist $\varepsilon>0,\, \eta>0$ and $\varphi$ such that for all $\bar{x}\in\Omega$, one has
	$$\varphi'(h(x)-h(\bar{x}))\dist (0,\partial h(x))\geq 1,$$
	for all $x\in \left\lbrace x\in \R^n\colon \dist(x,\Omega)<\varepsilon\right\rbrace \cap \left[h(\bar{x})<h(x)<h(\bar{x})+\eta\right].$	
\end{lemma}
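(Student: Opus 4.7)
The plan is to reduce the problem to a compactness argument on $\Omega$ and then patch the finitely many local desingularizing functions into a single global one. The key preliminary observation is that since $h$ is constant on $\Omega$, there is a common value $h^\ast$ with $h(\bar x) = h^\ast$ for every $\bar x \in \Omega$. Consequently the level-set part of the KL inequality, $[h(\bar x) < h(x) < h(\bar x) + r_0]$, does not depend on the choice of $\bar x \in \Omega$ and becomes simply $[h^\ast < h(x) < h^\ast + r_0]$. This is what makes a uniform statement possible in the first place.

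First I would apply the pointwise KL property at each $\bar x \in \Omega$: this yields $r_{\bar x} > 0$, $\varphi_{\bar x} \in \mathcal{K}(r_{\bar x})$, and an open ball $B(\bar x, \rho_{\bar x})$ (shrinking the neighborhood $U(\bar x)$ if needed) on which $\varphi_{\bar x}'(h(x) - h^\ast)\,\dist(0,\partial h(x)) \geq 1$ holds for $x$ in the relevant strip. The balls $\{B(\bar x, \rho_{\bar x}/2)\}_{\bar x \in \Omega}$ cover the compact set $\Omega$, so I extract a finite subcover indexed by $\bar x_1,\dots,\bar x_N$ with radii $\rho_i/2$. The halving is deliberate: setting $\varepsilon := \min_i \rho_i/2$, any $x$ with $\dist(x,\Omega) < \varepsilon$ admits some $\bar x_0 \in \Omega$ within distance $\varepsilon$, and some index $i$ with $\|\bar x_0 - \bar x_i\| < \rho_i/2$; the triangle inequality then places $x$ in the original ball $B(\bar x_i, \rho_i)$ where the $i$-th local KL estimate is valid.

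Next I would amalgamate the $\varphi_i$ into one function by defining $\eta := \min_i r_i$ and $\varphi := \sum_{i=1}^N \varphi_i$ on $[0,\eta)$. Continuity on $[0,\eta)$, $C^1$ on $(0,\eta)$, $\varphi(0)=0$, concavity, and strict positivity of $\varphi'$ are all preserved by finite sums of members of $\mathcal{K}$, so $\varphi \in \mathcal{K}(\eta)$. The crucial point is that since each $\varphi_j' > 0$ one has the pointwise domination $\varphi'(s) = \sum_j \varphi_j'(s) \geq \varphi_i'(s)$ for every $i$ and every $s \in (0,\eta)$. Combining this with the previous step: for $x$ with $\dist(x,\Omega) < \varepsilon$ and $h^\ast < h(x) < h^\ast + \eta$, pick $i$ as above, apply the $i$-th local KL inequality (legal because $h(x) < h^\ast + \eta \leq h^\ast + r_i$ and $x \in B(\bar x_i,\rho_i)$) and dominate $\varphi_i'$ by $\varphi'$ to conclude
\[
\varphi'\bigl(h(x)-h(\bar x)\bigr)\,\dist\bigl(0,\partial h(x)\bigr) \;\geq\; \varphi_i'\bigl(h(x)-h^\ast\bigr)\,\dist\bigl(0,\partial h(x)\bigr) \;\geq\; 1
\]
uniformly in $\bar x \in \Omega$.

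The only real obstacle is the construction of the single desingularizing $\varphi$: one must verify that summation keeps the function in the class $\mathcal{K}$ while simultaneously dominating the derivative of every component, and this is precisely what makes the halving-the-radii bookkeeping and the choice $\varphi = \sum_i \varphi_i$ fit together. Everything else is a routine compactness/triangle-inequality argument enabled by the constancy of $h$ on $\Omega$.
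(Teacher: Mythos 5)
Your proof is correct and is essentially the standard argument: the paper itself states this lemma without proof, citing \cite[Lemma 6]{BST}, and the proof given there proceeds exactly as you do --- finite subcover of $\Omega$ by halved balls, $\varepsilon$ and $\eta$ taken as the minima, and $\varphi:=\sum_i\varphi_i$ as the common desingularizing function, using that each $\varphi_j'>0$ so that $\varphi'\geq\varphi_i'$ pointwise. The constancy of $h$ on $\Omega$ making the sublevel strip independent of $\bar x$ is indeed the observation that makes the uniform statement work.
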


\section{Extragradient Method, Convergence and Complexity} \label{sec3}
\subsection{Extragradient Method}
We now describe our extragradient method dedicated to the minimization of problem \ref{P}. The method is defined, given an initial estimate $x_0\in \R^n$, by the following recursion, for $k\geq 1$,
\begin{numcases}{(EEG)}
y_{k}:=\prox_{s_k g}\left(x_k-s_k\nabla f(x_k)\right),\label{def1}\\
x_{k+1}:=\prox_{\alpha_k g}\left(x_k-\alpha_k\nabla f(y_k)\right)\label{def2}.
\end{numcases}
where $(s_k,\alpha_k)_{k\in \N}$ are positive step size sequences. We introduce relevant quantities, $s_-=\inf_{k\in \N} s_k, \,s^+=\sup_{k\in \N} s_k$, $\alpha_-=\inf_{k\in \N} \alpha_k$ and $\alpha^+=\sup_{k\in \N} \alpha_k$. Throughout the paper, we will consider the following condition on the two step size sequence,
$$({\bf C}):0<\alpha_-,\, 0<s_-,\,s^+<\frac{1}{L} \text{ and } 0< s_k\leq \alpha_k,\, \forall k\in \N.$$
Depending on the context, additional restrictions will be imposed on the step size sequences.
\subsection{Basic Properties}
We introduce in this subsection two technical properties of sequences produced by EEG method. These two technical properties, as abstract conditions introduced in tame nonconvex settings \cite{attbol,AttBolSva,BST},  allow us to prove the convergence of the sequences. We begin with a technical lemma.

\begin{lemma}
	Let $x\in \R^n$, $y \in \RR^n$, $t>0$, and $p=\prox_{tg}\left( x - t \nabla f(y)\right)$, then, for any $z \in \RR^n$, we have
	\begin{description}
\item[(i)]$\displaystyle	F(z) - F(p) \geq \left( \frac{1}{2t} - \frac{L}{2} \right) \|p - z \|^2 + \frac{1}{2t}\left( \|x - p\|^2 - \|x - z\|^2 \right) + \left\langle p-z, \nabla f(y) - \nabla f(z) \right\rangle.$
\item[(ii)] $\displaystyle F(z) - F(p) \geq \frac{1}{2t} \left( \|x - p\|^2 + \|z - p\|^2 - \|x - z\|^2\right) + \left\langle y - z, \nabla f(y)\right\rangle + f(z) - f(y) - \frac{L}{2} \|p - y\|^2.$
\end{description}
In addition, when $f$ is convex, we get $\left\langle y - z, \nabla f(y)\right\rangle + f(z) - f(y)\geq 0$. Therefore, inequality $(ii)$ implies that
$$ F(z) - F(p) \geq \frac{1}{2t} \left( \|x - p\|^2 + \|z - p\|^2 - \|x - z\|^2\right) - \frac{L}{2} \|p - y\|^2.$$
	\label{gradientMapping}
\end{lemma}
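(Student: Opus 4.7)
The plan is to apply Lemma~\ref{proximal} to the proximal operator that defines $p$, and then combine the resulting three-point inequality with the Descent Lemma~\ref{lem1} applied at two different base points to produce (i) and (ii) respectively. Set $u = x - t\nabla f(y)$, so that by definition $p = \prox_{tg}(u)$. Lemma~\ref{proximal} with this $u$, this $v=p$, and $w=z$ gives
\[
g(z) - g(p) \;\geq\; \frac{1}{2t}\bigl(\|u-p\|^2 + \|z-p\|^2 - \|u-z\|^2\bigr).
\]
The only nonobvious algebraic step is a routine expansion: substituting $u = x - t\nabla f(y)$ and cancelling the common $t^2\|\nabla f(y)\|^2$ term yields
\[
\|u-p\|^2 - \|u-z\|^2 \;=\; \|x-p\|^2 - \|x-z\|^2 + 2t\,\langle p-z,\nabla f(y)\rangle,
\]
so the displayed inequality becomes the cornerstone estimate
\[
g(z) - g(p) \;\geq\; \frac{1}{2t}\bigl(\|x-p\|^2 + \|z-p\|^2 - \|x-z\|^2\bigr) + \langle p-z,\nabla f(y)\rangle. \qquad(\star)
\]

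To obtain (i), I would invoke the Descent Lemma at the base point $z$ evaluated at $p$, namely $f(p) \leq f(z) + \langle p-z,\nabla f(z)\rangle + \frac{L}{2}\|p-z\|^2$. Adding $f(z)-f(p) \geq -\langle p-z,\nabla f(z)\rangle - \frac{L}{2}\|p-z\|^2$ to $(\star)$ and regrouping the inner products as $\langle p-z,\nabla f(y)-\nabla f(z)\rangle$ and the squared-norm terms as $(\tfrac{1}{2t} - \tfrac{L}{2})\|p-z\|^2$ gives exactly the inequality in (i).

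To obtain (ii), I would instead apply the Descent Lemma at the base point $y$ evaluated at $p$, namely $f(p) \leq f(y) + \langle p-y,\nabla f(y)\rangle + \frac{L}{2}\|p-y\|^2$. Writing $f(z) - f(p) \geq f(z) - f(y) - \langle p-y,\nabla f(y)\rangle - \tfrac{L}{2}\|p-y\|^2$ and adding to $(\star)$, the two inner products combine via $-\langle p-y,\nabla f(y)\rangle + \langle p-z,\nabla f(y)\rangle = \langle y-z,\nabla f(y)\rangle$, which is precisely the form stated in (ii). Finally, the supplementary claim when $f$ is convex follows immediately from the subgradient inequality at $y$: $f(z) \geq f(y) + \langle \nabla f(y), z-y\rangle$ rearranges to $\langle y-z,\nabla f(y)\rangle + f(z)-f(y) \geq 0$, which may be dropped from the right-hand side of (ii).

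There is no serious obstacle here: the whole argument is a careful bookkeeping exercise, and the only point where one must be attentive is the choice of base point for the Descent Lemma, since using $z$ produces the $\nabla f(y)-\nabla f(z)$ discrepancy of (i) whereas using $y$ isolates the directional term $\langle y-z,\nabla f(y)\rangle$ of (ii). Both versions will later be exploited in the analysis of EEG, (i) for controlling the extragradient correction and (ii) for the convex regime where the subgradient inequality can erase the first-order discrepancy.
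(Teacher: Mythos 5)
Your proposal is correct and follows essentially the same route as the paper: the three-point inequality of Lemma~\ref{proximal} applied to $u = x - t\nabla f(y)$ yields your cornerstone estimate $(\star)$, and the Descent Lemma is then invoked at base point $z$ for part (i) and at base point $y$ for part (ii), exactly as in the paper's proof. The algebraic expansion of $\|u-p\|^2 - \|u-z\|^2$ and the treatment of the convex case via the subgradient inequality also match the paper's argument.
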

\begin{proof}
	We apply Lemma \ref{proximal} with $u = x - t \nabla f(y)$, $v = \prox_{tg}(u)$ and $z = w$ which leads to
	\begin{align}
		g(z) - g(p) &\geq \frac{1}{2t}\left( \|x - t \nabla f(y) - p\|^2 + \|z - p\|^2 - \|x - t \nabla f(y) - z\|^2 \right)\nonumber\\
		&=\frac{1}{2t} \left( \|x - p\|^2 + \|z - p\|^2 - \|x - z\|^2\right) + \left\langle p - z, \nabla f(y)\right\rangle.
		\label{mapping1}
	\end{align}
	Now using the descent Lemma \ref{lem1}, we have that
	\begin{align}
		f(z) - f(p) \geq -\frac{L}{2} \|p-z\|^2 - \left\langle\nabla f(z), p - z\right\rangle.
		\label{mapping2}
	\end{align}
	The first claimed inequality results from summation of (\ref{mapping1}) and (\ref{mapping2}).
	Now using the descent Lemma \ref{lem1} again, we have that
	\begin{align}
		\left\langle\nabla f(y), p - y\right\rangle \geq f(p) - f(y) - \frac{L}{2} \|p-y\|^2 .
		\label{mapping3}
	\end{align}
	Combining (\ref{mapping1}) and (\ref{mapping3}), we obtain
	\begin{align}
	g(z) - g(p) \geq &\frac{1}{2t} \left( \|x - p\|^2 + \|z - p\|^2 - \|x - z\|^2\right) + \left\langle y - z, \nabla f(y)\right\rangle + f(p) - f(y) - \frac{L}{2} \|p - y\|^2.
		\label{mapping4}
	\end{align}
	The second claimed inequality follows by adding $f(z) - f(p)$ to (\ref{mapping4}).	
	This concludes the proof.
\end{proof}
We are now ready to describe a descent property for EEG method.
\begin{proposition}[Descent condition]\label{proper1}
	For any $k\in \N$, we have
	$$F(x_k)-F(x_{k+1})\geq \frac{1}{2\alpha_k} \|x_k-x_{k+1}\|^2+\left(\frac{1}{s_k}-\frac{L}{2}-\frac{1}{2\alpha_k}\right)\|x_k-y_k\|^2+\left(\frac{1}{2\alpha_k}-\frac{L}{2}\right) \|y_k-x_{k+1}\|^2.$$
\end{proposition}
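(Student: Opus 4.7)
The plan is to apply the technical Lemma \ref{gradientMapping}(i) twice, once to each of the two proximal steps defining the EEG iteration, and then add the two resulting inequalities.

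First I would handle the ``inner'' step. Writing $y_k=\prox_{s_k g}(x_k-s_k\nabla f(x_k))$ in the format of the lemma amounts to taking $x=x_k$, $y=x_k$, $t=s_k$, $p=y_k$. With the test point $z=x_k$, the $\|x-z\|^2$ term vanishes and the inner-product term $\langle y_k-x_k,\nabla f(x_k)-\nabla f(x_k)\rangle$ is identically zero, so part (i) of the lemma collapses to
\[
F(x_k)-F(y_k)\;\geq\;\Bigl(\tfrac{1}{s_k}-\tfrac{L}{2}\Bigr)\|x_k-y_k\|^2.
\]

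Next I would apply the same lemma to the ``outer'' step $x_{k+1}=\prox_{\alpha_k g}(x_k-\alpha_k\nabla f(y_k))$, this time with $x=x_k$, $y=y_k$, $t=\alpha_k$, $p=x_{k+1}$, and now taking the test point to be $z=y_k$. Again the inner-product term drops out because $\nabla f(y_k)-\nabla f(y_k)=0$, and part (i) yields
\[
F(y_k)-F(x_{k+1})\;\geq\;\Bigl(\tfrac{1}{2\alpha_k}-\tfrac{L}{2}\Bigr)\|y_k-x_{k+1}\|^2+\tfrac{1}{2\alpha_k}\bigl(\|x_k-x_{k+1}\|^2-\|x_k-y_k\|^2\bigr).
\]

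Finally I would add the two inequalities. The left-hand sides telescope to $F(x_k)-F(x_{k+1})$, and on the right-hand side the $-\tfrac{1}{2\alpha_k}\|x_k-y_k\|^2$ coming from the second bound combines with the $\tfrac{1}{s_k}-\tfrac{L}{2}$ coefficient of $\|x_k-y_k\|^2$ from the first bound, producing the advertised coefficient $\tfrac{1}{s_k}-\tfrac{L}{2}-\tfrac{1}{2\alpha_k}$. The $\tfrac{1}{2\alpha_k}\|x_k-x_{k+1}\|^2$ term and the $\bigl(\tfrac{1}{2\alpha_k}-\tfrac{L}{2}\bigr)\|y_k-x_{k+1}\|^2$ term appear directly, giving exactly the stated descent inequality.

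There is no real obstacle here: the whole argument is a matter of choosing the correct test point $z$ at each step so that the cross terms $\|x-z\|^2$ and $\langle p-z,\nabla f(y)-\nabla f(z)\rangle$ in Lemma \ref{gradientMapping}(i) simplify favourably. The only mild subtlety is that the descent is genuine only when the bracketed coefficients are nonnegative, which is precisely why condition $({\bf C})$ will later be strengthened (for instance $s_k<1/L$ together with $\alpha_k\geq s_k$ guarantees the coefficient of $\|x_k-y_k\|^2$ is controlled, while $\alpha_k\leq 1/L$ is needed for the $\|y_k-x_{k+1}\|^2$ term); here, however, we only need to produce the algebraic identity itself.
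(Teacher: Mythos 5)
Your proposal is correct and coincides with the paper's own proof: both apply Lemma \ref{gradientMapping}(i) first with $x=y=z=x_k$, $t=s_k$, $p=y_k$ to get $F(x_k)-F(y_k)\geq(\tfrac{1}{s_k}-\tfrac{L}{2})\|x_k-y_k\|^2$, then with $x=x_k$, $y=z=y_k$, $t=\alpha_k$, $p=x_{k+1}$, and sum the two inequalities. The algebra checks out exactly as you describe.
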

\begin{proof}
We fix an arbitrary $k \in \NN$. Applying inequality $(i)$ of Lemma~\ref{gradientMapping}, with $x=x_k$, $y=x_k$, $t = s_k$, $p=y_k$ and $z = x_k$, we obtain
\begin{equation}{\label{p1}}
F(x_k)-F(y_k)\geq \left(\frac{1}{s_k}-\frac{L}{2}\right)\|x_k-y_k\|^2.	
\end{equation}
Similarly, applying inequality $(i)$ of Lemma~\ref{gradientMapping}, with $x=x_k$, $y=y_k$, $t = \alpha_k$, $p=x_{k+1}$ and $z = y_k$, we obtain
	\begin{equation}{\label{p2}}
	F(y_k)-F(x_{k+1})\geq \frac{1}{2\alpha_k} \left(\|x_k-x_{k+1}\|^2-\|y_k-x_k\|^2 \right)+\left(\frac{1}{2\alpha_k}-\frac{L}{2}\right) \|y_k-x_{k+1}\|^2.
	\end{equation}
Combining inequalities (\ref{p1}) and (\ref{p2}), we obtain
$$F(x_k)-F(x_{k+1})\geq \frac{1}{2\alpha_k} \|x_k-x_{k+1}\|^2+\left(\frac{1}{s_k}-\frac{L}{2}-\frac{1}{2\alpha_k}\right)\|x_k-y_k\|^2+\left(\frac{1}{2\alpha_k}-\frac{L}{2}\right) \|y_k-x_{k+1}\|^2,$$
which concludes the proof
\end{proof}
\begin{remark}
	\label{rem:remarkDecrease}
	If we combine the constraint that $0<\alpha_k\leq \frac{1}{L}$ for all $k \in \NN$ with condition ({\bf C}), we deduce from Proposition \ref{proper1} that, for all $k \in \NN$,  $\frac{1}{s_k}-\frac{L}{2}-\frac{1}{2\alpha_k}\geq 0$, and
	$$F(x_k)-F(x_{k+1})\geq \frac{1}{2\alpha_k} \|x_k-x_{k+1}\|^2.$$
	Under this condition, we have that EEG is a descent method in the sense that it will produce a decreasing sequence of objective value.
\end{remark}
We now establish a second property of sequences produced by EEG method which is interpreted as a subgradient step property. We begin with a technical Lemma.
\begin{lemma}
	Assume that $(s_k, \alpha_k)_{k \in \NN}$ satisfy condition ({\bf C}). Then, for any $k\in \N$, it holds that
	\begin{align}
		\|x_{k+1}-y_{k}\|&\leq \left(\frac{1}{1-Ls_k}-\frac{s_k}{\alpha_k}\right) \|x_k-x_{k+1}\|.\label{ineq3}
	\end{align}
				\label{linkykxk}
\end{lemma}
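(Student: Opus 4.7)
The plan is to insert an auxiliary point between $y_k$ and $x_{k+1}$ that equalises one of the two mismatches in the proximal-gradient updates. Specifically, I will introduce
$$\tilde y_k := \prox_{s_k g}\bigl(x_k - s_k \nabla f(y_k)\bigr),$$
which shares the step size $s_k$ with the definition of $y_k$ but uses the gradient $\nabla f(y_k)$ from the definition of $x_{k+1}$. Writing $\|y_k - x_{k+1}\| \le \|y_k - \tilde y_k\| + \|\tilde y_k - x_{k+1}\|$ then splits the estimate into a ``same prox, different gradient point'' piece and a ``same gradient point, different prox step'' piece, each of which is controlled by a dedicated argument.

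For the first piece, nonexpansiveness of $\prox_{s_k g}$ (since $g$ is convex) combined with the $L$-Lipschitz continuity of $\nabla f$ gives
$$\|y_k - \tilde y_k\| \le s_k \|\nabla f(x_k) - \nabla f(y_k)\| \le L s_k \|y_k - x_k\|.$$
For the second piece, the optimality conditions characterise $\tilde y_k$ and $x_{k+1}$ by
$$\frac{x_k - \tilde y_k}{s_k} - \nabla f(y_k) \in \partial g(\tilde y_k),\qquad \frac{x_k - x_{k+1}}{\alpha_k} - \nabla f(y_k) \in \partial g(x_{k+1}).$$
The common gradient term cancels in the monotonicity inequality of $\partial g$ applied to these two inclusions; after clearing denominators, rewriting $\alpha_k(x_k - \tilde y_k) - s_k(x_k - x_{k+1}) = (\alpha_k - s_k)(x_k - x_{k+1}) - \alpha_k(\tilde y_k - x_{k+1})$, and using Cauchy--Schwarz, this yields
$$\|\tilde y_k - x_{k+1}\| \le \Bigl(1 - \tfrac{s_k}{\alpha_k}\Bigr) \|x_k - x_{k+1}\|.$$

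To convert the factor $\|y_k - x_k\|$ in the first piece into $\|x_k - x_{k+1}\|$, I will apply Lemma~\ref{proximal} to each of the two proximal definitions of $\tilde y_k$ and $x_{k+1}$, using the other as the test point $w$. Summing the two inequalities, the $g$-values and the $\nabla f(y_k)$-cross-products cancel; since $\tfrac{1}{s_k} \ge \tfrac{1}{\alpha_k}$ by condition~({\bf C}), one reads off the ``monotonicity in the step size'' estimate $\|\tilde y_k - x_k\| \le \|x_{k+1} - x_k\|$. The triangle inequality $\|y_k - x_k\| \le \|y_k - \tilde y_k\| + \|\tilde y_k - x_k\|$ combined with the first piece then gives $(1 - L s_k)\|y_k - x_k\| \le \|x_k - x_{k+1}\|$, where $L s_k < 1$ from~({\bf C}).

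Putting the three bounds together yields
$$\|y_k - x_{k+1}\| \le \frac{L s_k}{1 - L s_k}\|x_k - x_{k+1}\| + \Bigl(1 - \tfrac{s_k}{\alpha_k}\Bigr)\|x_k - x_{k+1}\|,$$
and the elementary identity $\tfrac{L s_k}{1 - L s_k} + 1 = \tfrac{1}{1 - L s_k}$ rearranges this into the claim. The main obstacle is obtaining the sharp constant in the bound on $\|\tilde y_k - x_{k+1}\|$: a purely quadratic application of Lemma~\ref{proximal} alone would only produce the weaker $\sqrt{(\alpha_k - s_k)/(\alpha_k + s_k)}\,\|x_k - x_{k+1}\|$, which is strictly larger than $(1 - s_k/\alpha_k)\|x_k - x_{k+1}\|$ and propagates to a looser constant than $\tfrac{1}{1 - Ls_k} - \tfrac{s_k}{\alpha_k}$. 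Exploiting the linear (subgradient) monotonicity structure of $\partial g$ rather than merely the quadratic Moreau envelope is therefore the crucial ingredient.
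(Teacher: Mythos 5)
Your proof is correct, and it reaches the paper's exact constant by a slightly different assembly of the same ingredients. The paper also introduces your auxiliary point (there called $z_{k+1}=\prox_{s_kg}(x_k-s_k\nabla f(y_k))$) and proves the same nonexpansiveness bound $\|y_k-z_{k+1}\|\le Ls_k\|x_k-y_k\|$ and the same consequence $(1-Ls_k)\|x_k-y_k\|\le\|x_k-x_{k+1}\|$; but it never estimates $\|z_{k+1}-x_{k+1}\|$. Instead it extracts $\tfrac{s_k}{\alpha_k}\|x_k-x_{k+1}\|\le\|x_k-z_{k+1}\|\le\|x_k-x_{k+1}\|$ from the common-gradient monotonicity inequality via a factored quadratic, and then runs a second, independent monotonicity argument directly on the pair $(y_k,x_{k+1})$, whose optimality conditions carry the two \emph{different} gradients $\nabla f(x_k)$ and $\nabla f(y_k)$; the resulting Lipschitz term $Ls_k\|x_k-y_k\|$ is then absorbed at the end. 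Your route replaces that second argument by the triangle inequality through $\tilde y_k$ together with the sharp estimate $\|\tilde y_k-x_{k+1}\|\le\bigl(1-\tfrac{s_k}{\alpha_k}\bigr)\|x_k-x_{k+1}\|$, which your rewriting plus Cauchy--Schwarz does deliver (using $\alpha_k\ge s_k$ to keep the sign); this confines the gradient mismatch entirely to the nonexpansiveness step and is arguably tidier. Two minor points: in your derivation of $\|\tilde y_k-x_k\|\le\|x_k-x_{k+1}\|$ from the two summed three-point inequalities, the coefficient of $\|\tilde y_k-x_k\|^2-\|x_{k+1}-x_k\|^2$ vanishes when $s_k=\alpha_k$, but then the inequality forces $\tilde y_k=x_{k+1}$ (as is also clear from the definitions), so the conclusion still holds; and the paper obtains that same estimate from plain subgradient monotonicity, so Lemma~\ref{proximal} is not strictly needed there. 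Your closing observation about the loss incurred by a purely quadratic use of Lemma~\ref{proximal} (the factor $\sqrt{(\alpha_k-s_k)/(\alpha_k+s_k)}$ versus $1-s_k/\alpha_k$) is accurate and explains why the linear monotonicity form is the right tool for the sharp constant.
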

\begin{proof}
	Denote $z_{k+1}=\prox_{s_kg}(x_k-s_k\nabla f(y_k))$, since $\prox_{s_kg}$ is 1-Lipschitz continuous, we get
	\begin{align}
	\|y_k-z_{k+1}\|&\leq \|(x_k-s_k\nabla f(y_k))-(x_k-s_k\nabla f(x_k))\|\notag\\
	&\leq Ls_k \|x_k-y_k\|,\notag
	\end{align}
where the second inequality follows from the fact that $\nabla f$ is $L$--Lipschitz continuous. Therefore,
	\begin{equation}\label{ineq1}
	\|x_k-z_{k+1}\|\geq  \|x_k-y_k\|-\|y_k-z_{k+1}\| \geq (1-Ls_k) \|x_k-y_k\|.	
	\end{equation}
Writing the optimality condition for (\ref{def2}), yields that
	\begin{equation}\label{opimalxk}
	\frac{x_k-x_{k+1}}{\alpha_k}-\nabla f(y_k)\in \partial g(x_{k+1}),
	\end{equation}	
and the convexity of $g$ implies
	$$\left\langle \frac{x_k-x_{k+1}}{\alpha_k}-\nabla f(y_k), z_{k+1}-x_{k+1}\right\rangle\leq g(z_{k+1})-g(x_{k+1}).$$
	Similarly, using the definition of $z_{k+1}$, we get
$$\left\langle  \frac{x_k-z_{k+1}}{s_k}-\nabla f(y_k), x_{k+1}-z_{k+1}\right\rangle\leq g(x_{k+1})-g(z_{k+1}).$$
Adding the last two inequalities, we obtain
	$$\left\langle \frac{x_k-z_{k+1}}{s_k}- \frac{x_k-x_{k+1}}{\alpha_k}, x_{k+1}-z_{k+1}\right\rangle\leq 0,$$
	or equivalently
	$$\left\langle \frac{x_k-z_{k+1}}{s_k}- \frac{x_k-x_{k+1}}{\alpha_k}, (x_{k+1}-x_k)+(x_k-z_{k+1})\right\rangle\leq 0.$$
	It follows that
	$$\frac{\|x_k-z_{k+1}\|^2}{s_k}+\frac{\|x_k-x_{k+1}\|^2}{\alpha_k}\leq \left(\frac{1}{s_k}+\frac{1}{\alpha_k}\right)\left\langle x_k-z_{k+1}, x_k-x_{k+1}\right\rangle.$$
Using the Cauchy-Schwarz inequality, we get
	$$\frac{\|x_k-z_{k+1}\|^2}{s_k}+\frac{\|x_k-x_{k+1}\|^2}{\alpha_k}\leq
	\left(\frac{1}{s_k}+\frac{1}{\alpha_k}\right)\|x_k-z_{k+1}\|.\|x_k-x_{k+1}\|.$$
	Since from condition ({\bf C}), $0 < s_k$, this is equivalent to
	$$\left(\|x_k-z_{k+1}\| -\|x_k-x_{k+1}\|\right) \left(\|x_k-z_{k+1}\|-\frac{s_k\|x_k-x_{k+1}\|}{\alpha_k}\right)\leq 0.$$
	This inequality asserts that the product of two terms is nonpositive. Hence one of the terms must be nonpositive and the other one must be nonnegative. From condition ({\bf C}), we have $ \frac{s_k}{\alpha_k}\leq 1$, the last term is bigger than the first one and hence must be nonnegative. This yields
	$$\frac{s_k}{\alpha_k} \|x_k-x_{k+1}\|\leq \|x_k-z_{k+1}\|\leq \|x_k-x_{k+1}\|.$$
	By combining the latter inequality with (\ref{ineq1}), we get
	\begin{equation}\label{ineq2}
	(1-Ls_k)\|x_k-y_k\|\leq\|x_k-z_{k+1}\|\leq  \|x_k-x_{k+1}\|.
	\end{equation}
	Similarly, from the definitions of $y_k, x_{k+1}$ and the convexity of $g$, we obtain that 	
	$$\left\langle  \frac{x_k-y_{k}}{s_k}-\nabla f(x_k), x_{k+1}-y_{k}\right\rangle\leq g(x_{k+1})-g(y_{k}),$$
	and
	$$\left\langle \frac{x_k-x_{k+1}}{\alpha_k}-\nabla f(y_k), y_{k}-x_{k+1}\right\rangle\leq g(y_{k})-g(x_{k+1}).$$
	Summing the last two inequalities, we have that
	$$\frac{1}{s_k}\|x_{k+1}-y_k\|^2+\left(\frac{1}{s_k}-\frac{1}{\alpha_k}\right)\langle x_{k+1}-y_{k},x_k-x_{k+1}\rangle \leq \langle x_{k+1}-y_k, \nabla f(x_k)-\nabla f(y_k)\rangle.$$
	Using the condition $0<s_k\leq \alpha_k$ and the Cauchy-Schwarz inequality, we get
	$$\frac{1}{s_k}\|x_{k+1}-y_k\|^2\leq \left(\frac{1}{s_k}-\frac{1}{\alpha_k}\right)\| x_{k+1}-y_{k}\|\|x_k-x_{k+1}\|+\|x_{k+1}-y_k\| \|\nabla f(x_k)-\nabla f(y_k)\|.$$
	Using the Lipschitz continuity of $\nabla f$, we have that
	$$ \|x_{k+1}-y_{k}\| \leq \left(1-\frac{s_k}{\alpha_k}\right)\|x_k-x_{k+1} \|+Ls_k\|x_{k}-y_k\|.$$
	Combining this inequality with (\ref{ineq2}), we obtain
	\begin{align}
	\|x_{k+1}-y_{k}\|&\leq\left(1-\frac{s_k}{\alpha_k}+\frac{Ls_k}{1-Ls_k}\right) \|x_k-x_{k+1}\|\nonumber\\
	&= \left(\frac{1}{1-Ls_k}-\frac{s_k}{\alpha_k}\right) \|x_k-x_{k+1}\|,
	\end{align}
	which is the required inequality.	
\end{proof}
We are now ready to prove the subgradient step property which is the second main element of the convergence proof.
\begin{proposition}[Subgradient step]\label{condGra}
	Assume that $(s_k, \alpha_k)_{k \in \NN}$ satisfy condition ({\bf C}). Then, for any $k\in \N$, there exists $ u_{k+1}\in \partial g(x_{k+1})$ such that
	$$\|u_{k+1}+\nabla f(x_{k+1})\|\leq \frac{L\alpha_k+(1-Ls_k)^2}{\alpha_k(1-Ls_k)}\|x_k-x_{k+1}\|.	$$
\end{proposition}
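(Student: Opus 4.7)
The plan is to produce $u_{k+1}$ directly from the optimality condition of the proximal step \eqref{def2}. That condition, already written down as \eqref{opimalxk}, yields the natural candidate
$$u_{k+1} := \frac{x_k - x_{k+1}}{\alpha_k} - \nabla f(y_k) \;\in\; \partial g(x_{k+1}).$$
With this choice one has
$$u_{k+1} + \nabla f(x_{k+1}) = \frac{x_k - x_{k+1}}{\alpha_k} + \bigl(\nabla f(x_{k+1}) - \nabla f(y_k)\bigr),$$
so the proof reduces to estimating the two terms on the right.

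The first term is trivially bounded by $\tfrac{1}{\alpha_k}\|x_k-x_{k+1}\|$ via the triangle inequality. For the second, I would invoke the $L$-Lipschitz continuity of $\nabla f$ to get $\|\nabla f(x_{k+1})-\nabla f(y_k)\|\leq L\|x_{k+1}-y_k\|$, and then apply the preceding Lemma~\ref{linkykxk} to obtain
$$\|x_{k+1}-y_k\| \leq \left(\frac{1}{1-Ls_k}-\frac{s_k}{\alpha_k}\right)\|x_k-x_{k+1}\|.$$
Note that condition $(\mathbf{C})$ guarantees $s^+<1/L$, hence $1-Ls_k>0$, so this bound is meaningful.

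The remaining work is algebraic: combining the two estimates gives
$$\|u_{k+1}+\nabla f(x_{k+1})\| \leq \left(\frac{1}{\alpha_k} + \frac{L}{1-Ls_k} - \frac{Ls_k}{\alpha_k}\right)\|x_k-x_{k+1}\|,$$
and a short manipulation of the coefficient, namely $\tfrac{1-Ls_k}{\alpha_k}+\tfrac{L}{1-Ls_k} = \tfrac{(1-Ls_k)^2 + L\alpha_k}{\alpha_k(1-Ls_k)}$, produces exactly the stated bound. There is no real obstacle here: everything is a direct consequence of the optimality condition, Lipschitz continuity of $\nabla f$, and Lemma~\ref{linkykxk}; the only thing to be careful about is recombining the fractions correctly and checking that condition $(\mathbf{C})$ keeps all denominators positive.
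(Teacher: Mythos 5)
Your proposal is correct and follows exactly the paper's own argument: take $u_{k+1}$ from the optimality condition (\ref{opimalxk}), bound the residual via the triangle inequality and the $L$-Lipschitz continuity of $\nabla f$, and then apply Lemma~\ref{linkykxk}. The final algebraic recombination of the coefficient is also the same, so there is nothing to add.
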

\begin{proof}				
Thanks to (\ref{opimalxk}), we deduce that there exists $u_{k+1}\in \partial g(x_{k+1})$ such that
	$$\frac{x_k-x_{k+1}}{\alpha_k}+\nabla f(x_{k+1}) -\nabla f(y_k)=u_{k+1}+\nabla f(x_{k+1}).$$
	This implies that
	$$\|u_{k+1}+\nabla f(x_{k+1})\|\leq\frac{\|x_k-x_{k+1}\|}{\alpha_k}+\|\nabla f(x_{k+1})-\nabla f(y_k)\|. $$
	Since $\nabla f$ is $L$-Lipschitz continuous, it follows that
	\begin{equation}\label{equa3}
	\|u_{k+1}+\nabla f(x_{k+1})\|\leq\frac{\|x_k-x_{k+1}\|}{\alpha_k}+L\|x_{k+1}-y_k\|.
	\end{equation}	
	Combining Lemma \ref{linkykxk} with (\ref{equa3}), we get
	\begin{equation}{\label{ineq4}}
	\|u_{k+1}+\nabla f(x_{k+1})\|\leq \frac{L\alpha_k+(1-Ls_k)^2}{\alpha_k(1-Ls_k)}\|x_k-x_{k+1}\|,
	\end{equation}
	and the result is proved.
\end{proof}
Combining Remark \ref{rem:remarkDecrease} and Proposition \ref{condGra} above, we have the following corollary which underlines the fact that EEG is actually an approximate gradient method in the sense of \cite{AttBolSva}.
\begin{corollary}\label{condC1}
	Assume that $\left(s_k,\alpha_k  \right)_{k\in\NN}$ satisfy the following
	$$({\bf C1}):  \left(s_k,\alpha_k  \right)_{k \in \NN} \text{ satisfy condition ({\bf C}) and } \alpha_k\leq \frac{1}{L}, \,\forall k\in \N.$$
	Then, for all $k \in \NN$
	\begin{description}
		\item[(i)] $F(x_{k+1})+\frac{1}{2\alpha_k}\|x_k-x_{k+1}\|^2\leq F(x_k).$
		\item [(ii)] There exists $\omega_{k+1}\in \partial F(x_{k+1})$ such that
		$$\|\omega_{k+1}\|\leq b_k\|x_k-x_{k+1}\|,$$
	\end{description}
	where,  $$0 < b_k:=\frac{L\alpha_k+(1-Ls_k)^2}{\alpha_k(1-Ls_k)} \leq b := \frac{2}{\alpha_-(1 - s^+L)}.$$
\end{corollary}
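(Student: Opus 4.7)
The corollary is essentially a repackaging of the two preceding propositions, so the plan is to combine them with the subdifferential sum rule (\ref{suru}) and then bound the constant.

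For part (i), I would invoke Proposition \ref{proper1} and show that condition (\textbf{C1}) forces the last two coefficients in the descent inequality to be nonnegative, leaving only the $\frac{1}{2\alpha_k}\|x_k-x_{k+1}\|^2$ term on the right. The coefficient of $\|y_k-x_{k+1}\|^2$ is $\frac{1}{2\alpha_k}-\frac{L}{2}$, which is $\geq 0$ because $\alpha_k\leq 1/L$. For the coefficient of $\|x_k-y_k\|^2$, note that $s_k\leq \alpha_k$ gives $\frac{1}{s_k}\geq \frac{1}{\alpha_k}$, hence $\frac{1}{s_k}-\frac{1}{2\alpha_k}\geq \frac{1}{2\alpha_k}\geq \frac{L}{2}$, so $\frac{1}{s_k}-\frac{L}{2}-\frac{1}{2\alpha_k}\geq 0$. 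Dropping these two nonnegative terms yields (i). This observation has already been recorded in Remark \ref{rem:remarkDecrease}, so the argument is really just a citation.

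For part (ii), I would apply Proposition \ref{condGra} to produce $u_{k+1}\in \partial g(x_{k+1})$ with $\|u_{k+1}+\nabla f(x_{k+1})\|\leq b_k\|x_k-x_{k+1}\|$. Setting $\omega_{k+1}:=u_{k+1}+\nabla f(x_{k+1})$ and applying the sum rule (\ref{suru}) (valid since $f$ is smooth and $g$ is proper l.s.c.), one gets $\omega_{k+1}\in \nabla f(x_{k+1})+\partial g(x_{k+1})=\partial F(x_{k+1})$, with exactly the stated bound.

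The only remaining item is the uniform upper bound $b_k\leq b$. Here I would use condition (\textbf{C1}) to bound the numerator and denominator of $b_k=\frac{L\alpha_k+(1-Ls_k)^2}{\alpha_k(1-Ls_k)}$ separately: since $\alpha_k\leq 1/L$ we have $L\alpha_k\leq 1$, and since $0<s_k<1/L$ we have $0<1-Ls_k\leq 1$, hence $(1-Ls_k)^2\leq 1$, so the numerator is at most $2$. For the denominator, $\alpha_k\geq \alpha_-$ and $1-Ls_k\geq 1-Ls^+>0$ yield $\alpha_k(1-Ls_k)\geq \alpha_-(1-s^+L)$. Combining these gives $b_k\leq \frac{2}{\alpha_-(1-s^+L)}=b$. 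No step here is subtle; the only mildly delicate point is checking that the denominator stays uniformly bounded away from zero, which is exactly why (\textbf{C}) requires $s^+<1/L$ (not merely $s_k<1/L$), so this is really the place where the strict uniform upper bound on $s_k$ matters.
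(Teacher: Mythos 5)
Your proposal is correct and follows exactly the route the paper intends: part (i) is Remark \ref{rem:remarkDecrease} (i.e.\ Proposition \ref{proper1} with the two extra coefficients shown nonnegative under ({\bf C1})), part (ii) is Proposition \ref{condGra} combined with the sum rule (\ref{suru}), and the bound $b_k\leq b$ follows from $L\alpha_k\leq 1$, $(1-Ls_k)^2\leq 1$ and $\alpha_k(1-Ls_k)\geq \alpha_-(1-s^+L)>0$. The paper gives no written proof beyond ``combining Remark \ref{rem:remarkDecrease} and Proposition \ref{condGra}'', so your write-up simply makes explicit what the authors left implicit.
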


\subsection{Convergence of EEG Method under KL Assumption}\label{subsec33}
In this subsection, we analyse the convergence of EEG method in the nonconvex setting. The main result is stated in Theorem \ref{rate}, which also describes the asymptotic rate of convergence. This result is based on the assumptions that $F$ has the KL property on $\crit F$ and that  $(s_k,\alpha_k)_{k\in\NN}$ satisfy conditions ({\bf C1}) from Corollary \ref{condC1}. We will also assume that the sequence $(x_k)_{k\in\N}$ generated by EEG is bounded. This boundedness assumption is not very restrictive here, since under condition ({\bf C1}), Corollary \ref{condC1} ensures that it is satisfied for any coercive objective function. Similarly to \cite[Lemma~3.5]{BST}, we first give some properties of $F$ on the set of accumulation points of $(x_k)_{k\in \N}$.
\begin{lemma}\label{const} Assume that the sequence $(x_k)_{k\in \N}$ generated by EEG method is bounded and that $(s_k,\alpha_k)_{k\in\NN}$ satisfy condition ({\bf C1}). Let $\Omega_0$ be the set of limit points of the sequence $(x_k)_{k\in \N}$. It holds that $\Omega_0$ is compact and nonempty, $\Omega_0 \subset \crit F$, $\dist (x_k, \Omega_0)\rightarrow 0$ and  $F(\bar{x}) = \lim_{k\to\infty}F(x_k)$ for all $\bar{x} \in \Omega_0$.	
\end{lemma}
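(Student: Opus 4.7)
The plan is to mirror the classical limit-set analysis of descent methods under the KL framework (as in \cite[Lemma~3.5]{BST}), adapted to the two-step EEG recursion. The four assertions will be proved in roughly the order they are stated.

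The first two statements are soft. Since $(x_k)_{k\in\N}$ is bounded, Bolzano--Weierstrass yields $\Omega_0\neq\emptyset$, and writing $\Omega_0=\bigcap_{N\ge 0}\overline{\{x_k : k\ge N\}}$ shows $\Omega_0$ is closed; boundedness then gives compactness. The property $\dist(x_k,\Omega_0)\to 0$ follows by the standard contradiction argument: if some subsequence were bounded away from $\Omega_0$, its further convergent subsequence would produce a limit point that contradicts the separation.

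Next I establish two summability facts that are the engine of the argument. Corollary~\ref{condC1}(i) states $F(x_{k+1})\le F(x_k)-\frac{1}{2\alpha_k}\|x_{k+1}-x_k\|^2$, so $F(x_k)$ is monotone nonincreasing and bounded below by $F^*$; hence it converges to some $\ell\ge F^*$. Telescoping and using $\alpha_k\le 1/L$ yields $\sum_{k}\|x_{k+1}-x_k\|^2\le 2\alpha^+(F(x_0)-\ell)<\infty$, so $\|x_{k+1}-x_k\|\to 0$. Combined with Lemma~\ref{linkykxk}, this also gives $\|y_k-x_{k+1}\|\to 0$ (the constant in front of $\|x_k-x_{k+1}\|$ there is uniformly bounded under condition (\textbf{C1})).

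The main obstacle is establishing $F(\bar x)=\ell$ for every $\bar x\in\Omega_0$; the difficulty is that $g$ is only lower semicontinuous, so continuity of $F$ along the sequence is not automatic. Fix $\bar x\in\Omega_0$ and a subsequence $x_{k_j}\to\bar x$. By the shift bound $\|x_{k+1}-x_k\|\to 0$, also $x_{k_j+1}\to\bar x$ and $y_{k_j}\to\bar x$. Using the defining variational inequality for $x_{k+1}=\prox_{\alpha_k g}(x_k-\alpha_k\nabla f(y_k))$ with the competitor $\bar x$,
\begin{equation*}
g(x_{k+1})+\frac{1}{2\alpha_k}\bigl\|x_{k+1}-x_k+\alpha_k\nabla f(y_k)\bigr\|^2
\le g(\bar x)+\frac{1}{2\alpha_k}\bigl\|\bar x-x_k+\alpha_k\nabla f(y_k)\bigr\|^2,
\end{equation*}
passing to the limit along $k_j$ (using continuity of $\nabla f$ and $\alpha_k\in[\alpha_-,1/L]$) gives $\limsup_j g(x_{k_j+1})\le g(\bar x)$. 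Lower semicontinuity of $g$ supplies the reverse inequality for $\liminf$, so $g(x_{k_j+1})\to g(\bar x)$. Adding $f(x_{k_j+1})\to f(\bar x)$ (continuity of $f$) gives $F(x_{k_j+1})\to F(\bar x)$, and since the full sequence $F(x_k)$ converges to $\ell$, we conclude $F(\bar x)=\ell$.

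Finally, for $\Omega_0\subset\crit F$, apply Corollary~\ref{condC1}(ii) to obtain $\omega_{k+1}\in\partial F(x_{k+1})$ with $\|\omega_{k+1}\|\le b\|x_{k+1}-x_k\|\to 0$. Along the subsequence $k_j$ we have $x_{k_j+1}\to\bar x$, $F(x_{k_j+1})\to F(\bar x)$, and $\omega_{k_j+1}\to 0$. The closedness property of the limiting subdifferential $\partial F$ recalled in Section~\ref{nonsmooth} then yields $0\in\partial F(\bar x)$, i.e., $\bar x\in\crit F$, completing the proof.
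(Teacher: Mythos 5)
Your proposal is correct and follows essentially the same route as the paper: monotonicity and square-summability from Corollary~\ref{condC1}(i), the prox-minimality inequality tested against the limit point to upgrade lower semicontinuity of $g$ to continuity along the subsequence, and Corollary~\ref{condC1}(ii) plus closedness of $\partial F$ to place limit points in $\crit F$. The only differences are cosmetic (you shift the index forward to $x_{k_j+1}$ and correctly use the step $\alpha_k$ in the prox comparison, where the paper's display writes $s_{k_q}$, and you prove compactness and $\dist(x_k,\Omega_0)\to 0$ directly rather than citing \cite{BST}).
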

\begin{proof}
	From the boundedness assumption, it is clear that $\Omega_0$ is nonempty. In view of Corollary \ref{condC1} {\bf i)}, it follows that $(F(x_k))_{k \in \NN}$ is nonincreasing. Furthermore, $F(x_k)$ is bounded from below by $F^*$, hence there exists $\bar{F} \in \R$ such that $\bar{F} = \lim_{k\to \infty} F(x_k)$. In addition, we have
	$$\sum_{k=1}^{m}\|x_{k+1}-x_k\|^2\leq 2\alpha^+\left(F(x_1)-F(x_{m+1})\right),$$
	therefore $\sum_{k=1}^\infty\|x_{k+1}-x_k\|^2$ converges, thus $(x_{k+1}-x_k)\rightarrow 0$. We now fix an arbitrary point $x^*\in\Omega_0$, which means that there exists a subsequence $(x_{k_q})_{q \in \NN}$ of $(x_k)_{k\in \NN}$ such that $\lim_{q \to \infty} x_{k_q}=x^*$, therefore, by lower semicontinuity of $g$ and continuity of $f$,
	\begin{equation}\label{lsc}
	g(x^*)\leq \liminf_{q \to \infty} g(x_{k_q}),\, f(x^*)=\lim_{q \to \infty} f(x_{k_q}).
	\end{equation}
	From the definition of $x_{k_q}$ and condition ({\bf C1}), we get for all $q \in \NN$,
	\begin{align}
	&g(x_{k_q})+\frac{1}{2s_+}\|x_{k_q-1}-x_{k_q}\|^2+\left\langle x_{k_q}-x_{k_q-1},\nabla f(y_{k_q-1})\right\rangle\notag\\
	\leq\;&g(x_{k_q})+\frac{1}{2s_{k_q}}\|x_{k_q-1}-x_{k_q}\|^2+\left\langle x_{k_q}-x_{k_q-1},\nabla f(y_{k_q-1})\right\rangle\notag\\
	\leq\;&g(x^*)+\frac{1}{2s_{k_q}}\|x^*-x_{k_q-1}\|^2+\left\langle x^*-x_{k_q-1},\nabla f(y_{k_q -1})\right\rangle.	\notag\\	
	\leq\;&g(x^*)+\frac{1}{2s_-}\|x^*-x_{k_q-1}\|^2+\left\langle x^*-x_{k_q-1},\nabla f(y_{k_q -1})\right\rangle.	\notag	
	\end{align}
Let $q\rightarrow \infty$, it follows that $\limsup_{q \to \infty} g(x_{k_q})\leq g(x^*)$, thus, in view of (\ref{lsc}), $\lim_{q \to \infty} g(x_{k_q})=g(x^*)$, therefore $\lim_{q \to \infty} F(x_{k_q})=F(x^*)$. Since $F(x_k)$ is nonincreasing, $\lim_{q \to \infty} F(x_{k_q})=\bar{F}$, and we deduce that $F(x^*)=\bar{F}$. Since $x^*$ was arbitrary in $\Omega_0$, it holds that $F$ is constant on  $\Omega_0$.\\
\indent Now, thanks to Corollary \ref{condC1} {\bf ii)}, there exist $\omega_{k+1}\in \partial F(x_{k+1})$, such that
	$$\|\omega_{k+1}\| \leq b_k\|x_k-x_{k+1}\|.$$	
Under condition ({\bf C1}), it holds that $b_k$ remains bounded. Since $\lim_{k\to\infty}x_k-x_{k+1}= 0,$ it holds that $\omega_k\rightarrow 0$. Combining with the closedness of $\partial F$, this implies that $0\in \partial F(x^*)$, hence $x^*\in \crit F$. Since $x^*$ was taken arbitrarily in $\Omega_0$, this means that $\Omega_0\subset \crit F$. The compactness of $\Omega_0$ is implied by \cite[Lemma 5]{BST}. Combining the boundedness of $(x_k)_{k\in \N}$ and the compactness of $\Omega_0$, we deduce that $\dist(x_k,\Omega_0)\rightarrow 0$ which concludes the proof.
\end{proof}

By combining Corollary \ref{condC1}, Lemma \ref{KLunif}, \ref{const} and using the methodology of \cite[Theorem 1]{BST}, we obtain a proof of convergence of EEG method in the non-convex case.
\begin{theorem}\label{rate}
Let $(x_k)_{k\in \N}$ be a sequence generated by EEG method which is assumed to be bounded.	Suppose that $(s_k,\alpha_k)_{k\in\NN}$ satisfy condition ({\bf C1}) and that $F$ has the KL property on $\crit F$. Then, the sequence $(x_k)_{k\in \N}$ converges to $x^*\in \crit F$, moreover
	$$\sum_{i=1}^{\infty} \|x_k-x_{k+1}\|<\infty.$$
\end{theorem}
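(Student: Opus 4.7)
The plan is to apply the standard Kurdyka-\L ojasiewicz convergence template of \cite{BST,AttBolSva}, since Corollary \ref{condC1} already packages sequences generated by EEG as an abstract descent method satisfying both the sufficient decrease and relative error conditions. All the ingredients are in place: a decreasing objective, a subgradient bound proportional to $\|x_k-x_{k+1}\|$, a bounded iterate sequence, and the structural information on the accumulation set $\Omega_0$ provided by Lemma \ref{const}.

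First I would appeal to Lemma \ref{const} to set $\bar F:=\lim_k F(x_k)$, note that $\Omega_0$ is nonempty and compact with $F\equiv \bar F$ on $\Omega_0$, and $\dist(x_k,\Omega_0)\to 0$. If $F(x_{k_0})=\bar F$ for some finite $k_0$, then Corollary \ref{condC1}(i) forces $x_{k+1}=x_k$ for all $k\geq k_0$ and the conclusion is trivial; so I may assume $F(x_k)>\bar F$ for every $k$. Next I would invoke the uniformized KL property (Lemma \ref{KLunif}) applied to $\Omega_0$ and $F$: there exist $\varepsilon>0$, $\eta>0$, and a desingularizing function $\varphi\in\mathcal K(\eta)$ such that, for every $x$ with $\dist(x,\Omega_0)<\varepsilon$ and $\bar F<F(x)<\bar F+\eta$,
\begin{equation*}
\varphi'\bigl(F(x)-\bar F\bigr)\,\dist\bigl(0,\partial F(x)\bigr)\geq 1.
\end{equation*}
Because $\dist(x_k,\Omega_0)\to 0$ and $F(x_k)\downarrow\bar F$, there is an index $N$ beyond which $x_k$ lies in this neighborhood for every $k\geq N$.

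The core of the proof is then to combine the two estimates of Corollary \ref{condC1} with the KL inequality above. Writing $B:=2/\bigl(\alpha_-(1-s^+L)\bigr)$ for the uniform bound on the $b_k$'s, and setting $\Delta_k:=\varphi(F(x_k)-\bar F)-\varphi(F(x_{k+1})-\bar F)$, the KL inequality yields $\varphi'(F(x_k)-\bar F)\geq 1/(B\|x_{k-1}-x_k\|)$, while concavity of $\varphi$ combined with Corollary \ref{condC1}(i) gives
\begin{equation*}
\Delta_k \geq \varphi'\bigl(F(x_k)-\bar F\bigr)\bigl(F(x_k)-F(x_{k+1})\bigr)\geq \frac{\|x_k-x_{k+1}\|^2}{2\alpha^+ B\,\|x_{k-1}-x_k\|}.
\end{equation*}
Taking square roots and applying the Young-type inequality $\sqrt{ab}\leq \tfrac{1}{4}a+b$ produces
\begin{equation*}
\|x_k-x_{k+1}\|\leq \tfrac{1}{4}\|x_{k-1}-x_k\|+2\alpha^+ B\,\Delta_k,\qquad k\geq N.
\end{equation*}
Summing from $N$ to any finite horizon, the $\Delta_k$ terms telescope to $\varphi(F(x_N)-\bar F)$ and the residual $\tfrac14\|x_{k-1}-x_k\|$ terms can be reabsorbed into the left-hand side, giving $\sum_{k\geq N}\|x_k-x_{k+1}\|<\infty$.

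Finite length implies that $(x_k)$ is Cauchy in $\R^n$ and therefore converges to some $x^*\in\R^n$; since $x^*\in\Omega_0\subset\crit F$ by Lemma \ref{const}, this completes the argument. The step I expect to require the most care is the passage from the quadratic descent inequality to the linear summable bound on $\|x_k-x_{k+1}\|$: it is here that the correct Young weighting must be chosen so that the leading coefficient on the $\|x_{k-1}-x_k\|$ term remains strictly below one, and one must also make sure that the KL neighborhood is entered once and for all, which is exactly what the combination of $\dist(x_k,\Omega_0)\to 0$ and monotone convergence of $F(x_k)$ to $\bar F$ guarantees.
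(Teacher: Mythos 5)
Your argument is correct and follows exactly the KL descent-sequence template of \cite[Theorem 1]{BST}, which is precisely what the paper does (it omits the details and simply refers to that result), assembling Corollary \ref{condC1}, Lemma \ref{const} and the uniformized KL property in the same way. The only step left implicit is that $\|x_{k-1}-x_k\|>0$ for $k\geq N$ before you divide by it, but this is immediate from the KL inequality itself, since $x_{k-1}=x_k$ would give $0\in\partial F(x_k)$ and contradict $\varphi'\bigl(F(x_k)-\bar F\bigr)\dist\bigl(0,\partial F(x_k)\bigr)\geq 1$ when $F(x_k)>\bar F$.
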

\begin{proof}
	The proof is similar to the proof of \cite[Theorem 1]{BST} and will be omitted.
\end{proof}

\begin{remark}[Convergence rate]
	When the KL desingularizing function of $F$ is of the form $\varphi(s)=cs^{1-\theta}$, where $c$ is a positive constant and $\theta \in (0,1]$, then we can estimate the rate of convergence of the sequence $(x_k)_{k\in \N}$, as follows (see \cite[Theorem 2]{attbol}).
	\begin{itemize}
		\item $\theta=0$ then the sequence $(x_k)$ converges in a~finite number of steps.
		\item  $\theta\in \left[0,\frac{1}{2}\right]$ then there exist $C>0$ and $\tau \in (0,1)$ such that
		$$\|x_k-x^*\|\leq C \tau^k, \forall k\in \N.$$
		\item  $\theta\in \left]\frac{1}{2}, 1\right[$ then there exist $C>0$ such that
		$$\|x_k-x^*\|\leq C k^{-\frac{1-\theta}{2\theta-1}}, \forall k\in \N.$$
	\end{itemize}
\end{remark}

\subsection{The Complexity of EEG in the Convex Case }\label{subsec34}
Throughout this section, we suppose that the function $f$ is convex and we focus on complexity and non asymptotic convergence rate analysis.
\subsubsection{Sublinear Convergence Rate Analysis}
We begin with a technical Lemma which introduces more restrictive step size conditions.
\begin{lemma}	
	\label{lem:stepConvex}
	Assume that $(s_k,\alpha_k)_{k\in \NN}$ satisfy the  following
	$$\text{({\bf C2}): }  (s_k,\alpha_k)_{k\in \NN} \text{ satisfy condition ({\bf C}) and } s_k\leq \frac{1}{2L}, \, \alpha_k\leq \frac{1}{L}-s_k, \, \forall k\in \N.$$
Then, for all $k \in \NN$,
	$$\frac{1}{\alpha_k}\|x_k-x_{k+1}\|^2-L\|x_{k+1}-y_k\|^2\geq 0.$$
\end{lemma}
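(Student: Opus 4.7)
The plan is to reduce the claimed inequality to a purely algebraic statement about the step sizes, then verify that algebraic statement by splitting the work between the two pieces of hypothesis (\textbf{C2}): the bound $s_k\le \tfrac{1}{2L}$ controls the factor $1-Ls_k$, while the tighter bound $\alpha_k\le \tfrac{1}{L}-s_k$ controls how far $\alpha_k$ may exceed $s_k(1-Ls_k)$.

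First I would invoke Lemma~\ref{linkykxk}, since (\textbf{C2}) entails (\textbf{C}). Writing $M_k:=\tfrac{1}{1-Ls_k}-\tfrac{s_k}{\alpha_k}$, the lemma gives $\|x_{k+1}-y_k\|\le M_k\|x_k-x_{k+1}\|$, so the target inequality follows from $L\alpha_k M_k^{2}\le 1$. Putting $M_k$ over a common denominator, set $a_k:=\alpha_k-s_k(1-Ls_k)$, so that $M_k=\tfrac{a_k}{\alpha_k(1-Ls_k)}$ and what needs to be proved becomes
\[
L\,a_k^{2}\le \alpha_k(1-Ls_k)^{2}.
\]

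Next I would establish the two simple bounds on $a_k$ that together produce this inequality. From $s_k\le\alpha_k$ (condition \textbf{C}) and $1-Ls_k\le 1$, one has $a_k\ge 0$ and $a_k\le \alpha_k$. The less obvious bound exploits $\alpha_k\le \tfrac{1}{L}-s_k=\tfrac{1-Ls_k}{L}$: substituting into the definition of $a_k$ yields
\[
a_k\;\le\;\frac{1-Ls_k}{L}-s_k(1-Ls_k)\;=\;(1-Ls_k)\Bigl(\tfrac{1}{L}-s_k\Bigr)\;=\;\frac{(1-Ls_k)^{2}}{L}.
\]

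The main obstacle is really just matching these two bounds correctly; once this is done, multiplying them gives $a_k^{2}\le \alpha_k\cdot \tfrac{(1-Ls_k)^{2}}{L}$, i.e.\ exactly $L a_k^{2}\le \alpha_k(1-Ls_k)^{2}$. Substituting back into $M_k$ and the bound from Lemma~\ref{linkykxk} gives $L\|x_{k+1}-y_k\|^{2}\le \tfrac{1}{\alpha_k}\|x_k-x_{k+1}\|^{2}$, which is the desired inequality. The role of $s_k\le\tfrac{1}{2L}$ is implicit here: it guarantees $1-Ls_k\ge\tfrac{1}{2}>0$, ensuring Lemma~\ref{linkykxk} is applicable and the denominator $\alpha_k(1-Ls_k)$ in $M_k$ stays strictly positive, so no division step is illegitimate.
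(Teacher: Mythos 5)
Your proof is correct. You follow the same overall route as the paper: invoke Lemma~\ref{linkykxk}, square the resulting bound, and reduce the claim to the algebraic inequality $L a_k^2 \le \alpha_k(1-Ls_k)^2$ with $a_k=\alpha_k-s_k(1-Ls_k)$, which is exactly the nonnegativity of the numerator $-L\alpha_k^2+(1-L^2s_k^2)\alpha_k-Ls_k^2(1-Ls_k)^2$ that the paper's proof arrives at. Where you diverge is in how that inequality is verified: the paper treats it as a quadratic in $\alpha_k$, writes out its two roots explicitly, and uses the estimate $a-b\le\sqrt{a^2-b^2}$ to show that (\textbf{C}) and (\textbf{C2}) place $\alpha_k$ between them; you instead sandwich $a_k$ between the two linear bounds $0\le a_k\le\alpha_k$ and $a_k\le(1-Ls_k)^2/L$ (the first from $s_k\le\alpha_k$, the second from $\alpha_k\le\frac{1}{L}-s_k$) and multiply them. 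Your verification is shorter and avoids the discriminant manipulations entirely. One small remark: you attribute the role of $s_k\le\frac{1}{2L}$ to keeping $1-Ls_k$ bounded away from zero, but positivity of $1-Ls_k$ already follows from $s^+<\frac{1}{L}$ in condition (\textbf{C}); the actual purpose of $s_k\le\frac{1}{2L}$ is to make the interval $\left[s_k,\frac{1}{L}-s_k\right]$ nonempty so that (\textbf{C}) and (\textbf{C2}) are compatible. This does not affect the validity of your argument.
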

\begin{proof}
	First, we note that if $(s_k, \alpha_k)_{k\in \NN}$ satisfy condition ({\bf C2}) then they also satisfy condition ({\bf C1}) and Proposition \ref{condGra} applies. Thanks to Lemma \ref{linkykxk}, we get
	\begin{align}
	\frac{1}{\alpha_k}\|x_k-x_{k+1}\|^2-L\|x_{k+1}-y_k\|^2&\geq \frac{1}{\alpha_k}\|x_k-x_{k+1}\|^2-L \left(\frac{1}{1-Ls_k}-\frac{s_k}{\alpha_k}\right)^2\|x_k-x_{k+1}\|^2\nonumber\\
	&=\frac{-L\alpha_k^2+(1-s_k^2L^2)\alpha_k-Ls_k^2(1-Ls_k)^2}{\alpha_k^2(1-Ls_k)^2} \|x_k-x_{k+1}\|^2.
	\label{eq:stepConvex0}
	\end{align}
	In addition, it can be checked using elementary calculation that
	$$-L\alpha_k^2+(1-s_k^2L^2)\alpha_k-Ls_k^2(1-Ls_k)^2\geq 0,$$
	is equivalent to
	\begin{align}
					(1-Ls_k)\frac{(1+Ls_k)-\sqrt{(1+Ls_k)^2-4L^2s_k^2}}{2L}\leq \alpha_k\leq (1-Ls_k)\frac{(1+Ls_k)+\sqrt{(1+Ls_k)^2-4L^2s_k^2}}{2L}.
					\label{eq:stepConvex1}
	\end{align}
	Note that, for $0\leq b\leq a$ then $a-b\leq \sqrt{a^2-b^2}$. Using this inequality, with the condition $2Ls_k\leq 1$, we get $\displaystyle (1+Ls_k)-2Ls_k\leq \sqrt{(1+Ls_k)^2-4L^2s_k^2}$. Thus,
	\begin{align*}
	\displaystyle (1-Ls_k)\frac{(1+Ls_k)-\sqrt{(1+Ls_k)^2-4L^2s_k^2}}{2L}&\leq (1-Ls_k)\frac{\left[(1+Ls_k)-(1-Ls_k)\right]}{2L}\\
	&=(1-Ls_k)s_k\leq s_k,
	\end{align*}	
	and
	$$(1-Ls_k)\frac{(1+Ls_k)+\sqrt{(1+Ls_k)^2-4L^2s_k^2}}{2L}\geq (1-Ls_k)\frac{(1+Ls_k)+(1-Ls_k)}{2L}=\frac{1}{L}-s_k.$$
	Condition ({\bf C2}) ensures that $s_k \leq \alpha_k \leq \frac{1}{L} - s_k$ and hence identity (\ref{eq:stepConvex1}) holds and (\ref{eq:stepConvex0}) implies that
	$$\frac{1}{\alpha_k}\|x_k-x_{k+1}\|^2-L\|x_{k+1}-y_k\|^2\geq 0, \, \forall k\in \N.$$
\end{proof}
With a similar method as in \cite{BT08}, we prove a sublinear convergence rate for $\left(F(x_k)\right)_{k\in \NN}$ in the convex case.
\begin{theorem}[Complexity of EEG method] Let $(x_k)_{k\in \N}$ be a sequence generated by EEG method. Suppose that $(s_k, \alpha_k)_{k\in \NN}$ satisfy condition ({\bf C2}) and that $f$ is convex. Then, for any $x^*\in \argmin F$, we have
	$$F(x_{m})-F(x^*)\leq \frac{1}{2m \alpha_-}\|x_0-x^*\|^2,\, \forall m\in \N^*.$$
\end{theorem}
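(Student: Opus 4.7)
\bigskip
\noindent\textbf{Proof proposal.}
The plan is to mimic the classical FISTA-style analysis of \cite{BT08}: produce a per-iteration inequality that telescopes into the familiar $\|x_0-x^*\|^2$ bound, using the convexity of $f$ to pay for the descent term, and using condition \textbf{(C2)} via Lemma~\ref{lem:stepConvex} to absorb the extra $\frac{L}{2}\|x_{k+1}-y_k\|^2$ that the extragradient step creates.

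The starting point is the second inequality in Lemma~\ref{gradientMapping}. Since $f$ is convex, the lemma states that for $v=\prox_{tg}(x-t\nabla f(y))$ and any $z\in\R^n$,
$$F(z)-F(v)\geq \frac{1}{2t}\bigl(\|x-v\|^2+\|z-v\|^2-\|x-z\|^2\bigr)-\frac{L}{2}\|v-y\|^2.$$
I would apply this with $x=x_k$, $y=y_k$, $t=\alpha_k$, $v=x_{k+1}$ and $z=x^*\in\argmin F$. Rearranging yields
$$F(x_{k+1})-F(x^*)\leq \frac{1}{2\alpha_k}\bigl(\|x_k-x^*\|^2-\|x_{k+1}-x^*\|^2\bigr)-\frac{1}{2\alpha_k}\|x_k-x_{k+1}\|^2+\frac{L}{2}\|x_{k+1}-y_k\|^2.$$
The last two terms are the obstacle: without condition \textbf{(C2)} the bonus $\frac{L}{2}\|x_{k+1}-y_k\|^2$ could spoil telescoping. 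This is precisely where Lemma~\ref{lem:stepConvex} is designed to intervene: under \textbf{(C2)} it gives $\frac{1}{\alpha_k}\|x_k-x_{k+1}\|^2-L\|x_{k+1}-y_k\|^2\geq 0$, so these two residual terms together are nonpositive, leaving
$$F(x_{k+1})-F(x^*)\leq \frac{1}{2\alpha_k}\bigl(\|x_k-x^*\|^2-\|x_{k+1}-x^*\|^2\bigr).$$

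Since the right-hand side need not telescope through the varying $\alpha_k$, I would multiply through by $\alpha_k$, obtaining $\alpha_k(F(x_{k+1})-F(x^*))\leq \tfrac{1}{2}(\|x_k-x^*\|^2-\|x_{k+1}-x^*\|^2)$, which telescopes cleanly. Summing from $k=0$ to $k=m-1$ gives $\sum_{k=0}^{m-1}\alpha_k(F(x_{k+1})-F(x^*))\leq \tfrac{1}{2}\|x_0-x^*\|^2$. Finally I would invoke the descent property of Corollary~\ref{condC1}(i) (valid since \textbf{(C2)} implies \textbf{(C1)}), which makes $(F(x_k))_{k\in\N}$ nonincreasing, so each $F(x_{k+1})-F(x^*)\geq F(x_m)-F(x^*)\geq 0$ for $k\leq m-1$. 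Combined with $\alpha_k\geq \alpha_-$ this yields $m\alpha_-(F(x_m)-F(x^*))\leq \tfrac{1}{2}\|x_0-x^*\|^2$, which is the claim.

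The genuinely delicate ingredient is the cancellation between $-\frac{1}{2\alpha_k}\|x_k-x_{k+1}\|^2$ and $+\frac{L}{2}\|x_{k+1}-y_k\|^2$; everything else is bookkeeping. This cancellation is exactly what the more stringent step-size condition \textbf{(C2)} (over \textbf{(C1)}) was engineered to guarantee through Lemma~\ref{lem:stepConvex}, so once that lemma is in hand, the proof is short.
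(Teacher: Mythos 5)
Your proposal is correct and follows essentially the same route as the paper: the same application of Lemma~\ref{gradientMapping}(ii) with $z=x^*$, the same cancellation of the residual terms via Lemma~\ref{lem:stepConvex} under \textbf{(C2)}, and the same final use of the monotonicity of $(F(x_k))_k$. The only (immaterial) difference is in handling the varying step size: you multiply by $\alpha_k$ so the distances telescope exactly and then bound $\alpha_k\geq\alpha_-$ on the left, whereas the paper first replaces $\frac{1}{2\alpha_k}$ by $\frac{1}{2\alpha_-}$ using the sign of $\|x^*-x_{k+1}\|^2-\|x^*-x_k\|^2$ and then telescopes.
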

\begin{proof}			
	We first fix arbitrary $k \in \NN$ and $x^*\in\argmin F$. Since $f$ is convex, applying inequality $(ii)$ of Lemma \ref{gradientMapping} with $x = x_k$, $y = y_k$, $t = \alpha_k$, $p = x_{k+1}$ and $z = x^*$, we obtain
$$F(x^*)-F(x_{k+1})\geq \frac{1}{2\alpha_k}\left(\|x^*-x_{k+1}\|^2-\|x^*-x_k\|^2 \right)+\frac{1}{2\alpha_k}\|x_k-x_{k+1}\|^2-\frac{L}{2}\|x_{k+1}-y_k\|^2.$$
Using the fact that $F(x_k)$ is noninreasing and bounded from bellow by $F(x^*)$, it follows from Lemma \ref{lem:stepConvex} that
	$$0\geq F(x^*)-F(x_{k+1})\geq \frac{1}{2\alpha_k}\left(\|x^*-x_{k+1}\|^2-\|x^*-x_k\|^2 \right)\geq \frac{1}{2\alpha_-}\left(\|x^*-x_{k+1}\|^2-\|x^*-x_k\|^2 \right).$$
Summing this inequality for $k=0,\cdots, m-1$ gives
\begin{align}
	\label{complex1}
	mF(x^*)-\sum_{k=1}^{m}F(x_{k}) &\geq \frac{1}{2\alpha_{-}} (\|x^*-x_{m}\|^2-\|x^*-x_{0}\|^2).
\end{align}

Coming back to Corollary~\ref{condC1}, it is easy to see that the sequence $(F(x_k))_{k\in \N}$ is nonincreasing, then $\displaystyle \sum_{k=1}^{m}F(x_{k})\geq mF(x_m)$.
	Combining with (\ref{complex1}), we get
		$$m\left(F(x^*)-F(x_{m})\right)\geq \frac{1}{2\alpha_-} \left(\|x^*-x_{m}\|^2-\|x^*-x_0\|^2\right).$$
		It follows that
	$$F(x_{m})-F(x^*)\leq \frac{1}{2m\alpha_-} \|x^*-x_0\|^2, \, \forall m\in \N^*.$$
\end{proof}
\subsubsection{Small-Prox Type Result under KL Property}
We now study the complexity of EEG method when $F$ has, in addition, the KL property on $\crit F$. First, using the convexity of $f$, Proposition~\ref{proper1}, can be improved by using the following result.

\begin{proposition}\label{descentCD2}
Assume that $f$ is convex and $(s_k, \alpha_k)_{k\in \NN}$ satisfy condition ({\bf C}), then for all $k\in \N$, we have
	$$F(x_k)-F(x_{k+1})\geq c_k\|x_k-x_{k+1}\|^2,$$
	where
	$$c_k:= \frac{1}{\alpha_k}-\frac{L}{2}\left(\frac{1}{1-Ls_k}- \frac{s_k}{\alpha_k}\right)^2.$$
\end{proposition}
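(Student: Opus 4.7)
The plan is to reuse the two building blocks already assembled in the paper: the convex-case simplification of Lemma~\ref{gradientMapping}(ii) and the $\|x_{k+1}-y_k\|$ bound from Lemma~\ref{linkykxk}. Proposition~\ref{proper1} was derived from part (i) of Lemma~\ref{gradientMapping} without exploiting convexity of $f$, so one loses the cross term $\langle p-z,\nabla f(y)-\nabla f(z)\rangle$. In the convex case, part (ii) of the same lemma is sharper because the inner product piece $\langle y-z,\nabla f(y)\rangle+f(z)-f(y)$ is nonnegative and can simply be dropped.

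Concretely, I would apply the convex version of Lemma~\ref{gradientMapping}(ii) with the substitutions $x=x_k$, $y=y_k$, $t=\alpha_k$, $p=x_{k+1}$, and $z=x_k$. The term $\|x-z\|^2$ vanishes, and the two surviving $\|x-p\|^2$ and $\|z-p\|^2$ terms combine to $2\|x_k-x_{k+1}\|^2$, leaving
\[
F(x_k)-F(x_{k+1}) \;\geq\; \frac{1}{\alpha_k}\|x_k-x_{k+1}\|^2 \;-\; \frac{L}{2}\|x_{k+1}-y_k\|^2.
\]
This is the cleaner descent identity that convexity of $f$ buys us.

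Next I would invoke Lemma~\ref{linkykxk}, which is valid under condition ({\bf C}), to substitute
\[
\|x_{k+1}-y_k\|^2 \;\leq\; \left(\frac{1}{1-Ls_k}-\frac{s_k}{\alpha_k}\right)^{\!2}\|x_k-x_{k+1}\|^2.
\]
Plugging this bound into the previous inequality and factoring out $\|x_k-x_{k+1}\|^2$ gives exactly the coefficient $c_k$ in the statement, completing the argument.

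There is no real obstacle here; the proposition is essentially a bookkeeping improvement over Proposition~\ref{proper1} that becomes available once $f$ is convex. The only point that requires a small check is that condition ({\bf C}) alone (without the additional $\alpha_k\leq 1/L$ of ({\bf C1}) or the tighter bounds of ({\bf C2})) is enough to apply Lemma~\ref{linkykxk}, which it is, since that lemma is stated under ({\bf C}).
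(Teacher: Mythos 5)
Your proof is correct and follows exactly the paper's own argument: apply the convex form of Lemma~\ref{gradientMapping}(ii) with $x=x_k$, $y=y_k$, $t=\alpha_k$, $p=x_{k+1}$, $z=x_k$ to get $F(x_k)-F(x_{k+1})\geq \frac{1}{\alpha_k}\|x_k-x_{k+1}\|^2-\frac{L}{2}\|x_{k+1}-y_k\|^2$, then substitute the bound from Lemma~\ref{linkykxk}. Your closing observation that Lemma~\ref{linkykxk} only needs condition ({\bf C}) is also the right check, and squaring that bound is legitimate since its right-hand side is nonnegative under ({\bf C}).
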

\begin{proof}
Fix an arbitrary $k \in \NN$. Since $f$ is convex, applying inequality $(ii)$ of Lemma~\ref{gradientMapping}, with $x = x_k$, $y = y_k$, $t = \alpha_k$, $p=x_{k+1}$ and $z = x_k$, we get
	\begin{equation}\label{descentcd3}
	F(x_k)-F(x_{k+1})\geq  \frac{1}{\alpha_k}\|x_k-x_{k+1}\|^2-\frac{L}{2}\|x_{k+1}-y_k\|^2.
	\end{equation}
	Combining inequality (\ref{descentcd3}) with Lemma \ref{linkykxk}, we get the desired result,
	$$F(x_k)-F(x_{k+1})\geq\left[ \frac{1}{\alpha_k}-\frac{L}{2}\left(\frac{1}{1-Ls_k}-\frac{s_k}{\alpha_k}\right)^2\right] \|x_k-x_{k+1}\|^2.$$
\end{proof}
We now consider another step size condition.
\begin{lemma}
	\label{lem:stepSize3}
	Suppose that $s_k, \, \alpha_k$ satisfy the following condition
	$$({\bf C3})\begin{cases}
	s_k, \alpha_k \text{ satisfy condition ({\bf C})}\notag\\
	s_k\leq \frac{\sqrt{5}-1}{2L},\, \text{ and } \alpha_k\leq \frac{2}{L}-2s_k-(1-Ls_k)Ls_k^2, \, \forall k\in \N. \notag
	\end{cases}$$
	Then, for all $k \in \NN$,
	$$ \frac{1}{\alpha_k}-\frac{L}{2}\left(\frac{1}{1-Ls_k}- \frac{s_k}{\alpha_k}\right)^2\geq C:=\frac{L^3s_-^2(1+Ls_-)}{2(2-L^2s_-^2)^2(1-Ls_-)}>0.$$	
\end{lemma}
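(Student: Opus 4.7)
The plan is to reduce both inequalities to a single clean expression in two scaled variables, then minimize it in succession: first over the $\alpha_k$-variable (for a fixed $s_k$), then over the $s_k$-variable.

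First, I introduce the dimensionless variables $w = Ls_k$ and $z = L\alpha_k$. Condition $({\bf C3})$ translates into $0 < w \leq (\sqrt{5}-1)/2$ and $w \leq z \leq (1-w)(2-w^2)$ (the upper endpoint is obtained by expanding $\tfrac{2}{L} - 2s_k - (1-Ls_k)Ls_k^2$; the golden-ratio bound on $w$ is exactly what is needed for the interval to be nonempty). Rewriting $c_k$ in these variables gives
$$c_k \;=\; \frac{L\bigl[\,2z(1-w)^2 - (z - w(1-w))^2\,\bigr]}{2z^2(1-w)^2}.$$
A short completion of the square in $z$ turns the bracket into the much nicer form
$$2z(1-w)^2 - (z - w(1-w))^2 \;=\; (1-w)^3(1+w) - \bigl(z - (1-w)\bigr)^2,$$
so that $c_k = \dfrac{L\bigl[(1-w)^3(1+w) - (z-(1-w))^2\bigr]}{2z^2(1-w)^2}$. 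This reformulation is the key step and the main source of algebraic labor.

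Next, I show $c_k$ is decreasing in $z$ on the feasible interval. Logarithmic differentiation (or a direct computation of $\partial c_k/\partial z$) reduces this to verifying $z > w^2(1-w)$, which is immediate from $z \geq w > w^2(1-w)$ for $w \in (0,1)$. Hence the minimum over $z$ is attained at the upper endpoint $z = (1-w)(2-w^2)$. Using $(1-w)(2-w^2) - (1-w) = (1-w)(1-w^2) = (1-w)^2(1+w)$, I evaluate
$$(1-w)^3(1+w) - \bigl((1-w)^2(1+w)\bigr)^2 \;=\; (1-w)^3(1+w)\bigl[1 - (1-w)(1+w)\bigr] \;=\; w^2(1-w)^3(1+w),$$
and after simplification
$$c_k \;\geq\; \frac{Lw^2(1+w)}{2(1-w)(2-w^2)^2} \;=:\; \frac{L}{2}\,\phi(w).$$

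Finally, I show that $\phi$ is strictly increasing on $(0,1)$, so that the minimum of $\phi$ over $w \geq w_- := Ls_-$ occurs at $w_-$. The quickest way is the logarithmic derivative
$$\frac{\phi'(w)}{\phi(w)} \;=\; \frac{2}{w} + \frac{1}{1+w} + \frac{1}{1-w} + \frac{4w}{2-w^2},$$
which is a sum of four positive terms. Therefore $c_k \geq \tfrac{L}{2}\phi(w_-) = \dfrac{Lw_-^2(1+w_-)}{2(1-w_-)(2-w_-^2)^2}$, and unscaling $w_- = Ls_-$ gives exactly the constant $C$ in the statement; positivity of $C$ is obvious since $0 < w_- < 1$. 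The main obstacle is the algebraic rearrangement in the first step; once the clean square-completed form is in hand, the remainder is routine monotonicity.
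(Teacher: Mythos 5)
Your proof is correct and follows essentially the same route as the paper's: both arguments reduce to showing the quantity is monotone in $\alpha_k$ on the feasible interval (the paper via the concave quadratic $Q(u)$ in $u=1/(L\alpha_k)$, you via a completed square in $z=L\alpha_k$), evaluate at the extreme step $\alpha_k^+=\frac{2}{L}-2s_k-(1-Ls_k)Ls_k^2$ to get $\frac{L^3s_k^2(1+Ls_k)}{2(1-Ls_k)(2-L^2s_k^2)^2}$, and then pass from $s_k$ to $s_-$. The only substantive difference is that you explicitly verify, via the logarithmic derivative of $\phi$, the monotonicity in $s_k$ that the paper's last inequality uses without comment.
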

\begin{remark}
	Before starting the proof, we make a comment on the restriction $s_k\leq \frac{\sqrt{5}-1}{2L}$, which is only presented here to ensure consistency of condition ({\bf C}) and condition ({\bf C3}). By analysing a degree three polynomial, one can check that
	$$Ls_k\leq 2-2Ls_k-(1-Ls_k)L^2s_k^2$$
	if and only if
	$$Ls_k\in \left[-\frac{\sqrt{5}+1}{2},\frac{\sqrt{5}-1}{2}\right] \cup \left[2, +\infty\right[.$$
	Hence the bound $s_k \leq \frac{\sqrt{5}-1}{2L}$ is a necessary condition to ensures that $s_k \leq \frac{2}{L}-2s_k-(1-Ls_k)Ls_k^2$. This upper limit on $s_k$ could be removed from condition ({\bf C3}), but then it would be enforced implicitly by the combination of conditions ({\bf C}) and ({\bf C3}) which results in $s_k \leq \alpha_k \leq \frac{2}{L}-2s_k-(1-Ls_k)Ls_k^2$. We preferred to write it explicitly.
\end{remark}
\begin{proof}
	We fix an arbitrary $k \in \NN$. Set
\begin{align}\label{eq:stepSize30}
	\alpha_k^+&=\frac{2}{L}-2s_k-(1-Ls_k)Ls_k^2 \\
	Q(u)&=u-\frac{1}{2}\left(\frac{1}{1-Ls_k}-Ls_ku\right)^2,
\end{align}
	where one can think of $u$ satisfying $u=\frac{1}{L\alpha_k}\in \left[\frac{1}{L\alpha_k^+}, \frac{1}{Ls_k}\right]$.
	The maximum of $Q(u)$ is attained for $u=\frac{1}{(1-Ls_k)L^2s_k^2} \geq \frac{1}{Ls_k}$, and the inequality stands because $Ls_k \leq 1$ and $1 - Ls_k \leq 1$. Note that conditions ({\bf C}) and ({\bf C3}) ensure that $\alpha_k^+ \geq s_k$, hence $Q$ is increasing on $\displaystyle\left[\frac{1}{L\alpha_k^+}, \frac{1}{Ls_k}\right]$. Combining conditions ({\bf C}) and ({\bf C3}), we have that $s_k \leq \alpha_k \leq \alpha_k^+$ and therefore,
	\begin{equation}\label{eq:stepSize31}
	LQ\left(\frac{1}{L\alpha_k}\right) = \frac{1}{\alpha_k}-\frac{L}{2}\left(\frac{1}{1-Ls_k}- \frac{s_k}{\alpha_k}\right)^2\geq LQ\left(\frac{1}{L\alpha_k^+}\right).
	\end{equation}
	We now turn to algebraic manipulations to compute $LQ\left(\frac{1}{L\alpha_k^+}\right)$. First we expand and reduce to common denominator.
	\begin{align}
	\label{eq:stepSize32}
					&LQ\left(\frac{1}{L\alpha_k^+}\right)\\
					=\;&L\left(\frac{1}{L\alpha_k^+}-\frac{1}{2}\left(\frac{1}{1-Ls_k}-Ls_k \frac{1}{L\alpha_k^+}\right)^2  \right)\nonumber\\
					=\;&\frac{L}{2(1 - Ls_k)^2 (L\alpha_k^+)^2} \left( 2L\alpha_k^+ (1 - Ls_k)^2 - (L\alpha_k^+)^2 + 2 Ls_k (1- Ls_k) L\alpha_k^+ - L^2s_k^2(1-Ls_k)^2\right)\nonumber\\
					=\;&\frac{L}{2(1 - Ls_k)^2 (L\alpha_k^+)^2} \left( - (L\alpha_k^+)^2 + 2 (1- Ls_k) L\alpha_k^+ - L^2s_k^2(1-Ls_k)^2\right).\nonumber
	\end{align}
	We now use the expression of $\alpha_k^+$ given in (\ref{eq:stepSize30}) and expand the expression in (\ref{eq:stepSize32}) by using $L \alpha_k^+ = (1 - Ls_k)(2 - L^2 s_k^2)$.
	\begin{align}
	\label{eq:stepSize33}
					&LQ\left(\frac{1}{L\alpha_k^+}\right) \nonumber\\
					=\;&\frac{L}{2(1 - Ls_k)^4 (2 - L^2 s_k^2)^2} \left( - (1 - Ls_k)^2(2 - L^2 s_k^2)^2 + 2 (1- Ls_k)^2(2 - L^2 s_k^2) - L^2s_k^2(1-Ls_k)^2\right)\nonumber\\
					=\;&\frac{L}{2(1 - Ls_k)^2 (2 - L^2 s_k^2)^2} \left( - (2 - L^2 s_k^2)^2 + 2 (2 - L^2 s_k^2) - L^2s_k^2\right)\nonumber\\
					=\;&\frac{L}{2(1 - Ls_k)^2 (2 - L^2 s_k^2)^2} \left( - 4 + 4 L^2 s_k^2 -  L^4 s_k^4+ 4 - 2 L^2 s_k^2 - L^2s_k^2\right)\nonumber\\
					=\;&\frac{L}{2(1 - Ls_k)^2 (2 - L^2 s_k^2)^2} \left(  L^2 s_k^2 -  L^4 s_k^4\right)\nonumber\\
					=\;&\frac{L^3s_k^2}{2(1 - Ls_k)^2 (2 - L^2 s_k^2)^2} \left( 1 -  L^2 s^2_k\right)\nonumber\\
					=\;&\frac{L^3s_k^2}{2(1 - Ls_k) (2 - L^2 s_k^2)^2} \left( 1+L s_k\right).
	\end{align}
	Combining (\ref{eq:stepSize31}) and (\ref{eq:stepSize33}), we obtain
	\begin{align*}
		\frac{1}{\alpha_k}-\frac{L}{2}\left(\frac{1}{1-Ls_k}- \frac{s_k}{\alpha_k}\right)^2\nonumber &\geq LQ\left(\frac{1}{L\alpha_k^+}\right)\\
		&=\frac{L^3s_k^2(1+Ls_k)}{2(2-L^2s_k^2)^2(1-Ls_k)}\\
		&\geq \frac{L^3s_-^2(1+Ls_-)}{2(2-L^2s_-^2)^2(1-Ls_-)}=C,
	\end{align*}
	which is the desired result.
\end{proof}
	
We can check that, when condition ({\bf C3}) is satisfied, one has
$$0<b_k=\frac{L\alpha_k +(1-Ls_k)^2}{\alpha_k(1-Ls_k)} \leq \frac{2-2Ls_k+(1-Ls_k)^2}{\alpha_k(1-Ls_k)}=\frac{3-Ls_k}{\alpha_k}\leq B= \frac{3}{\alpha_{-}}.$$
Combining this with Proposition~\ref{condGra},   \ref{descentCD2} and Lemma \ref{lem:stepSize3}, we obtain the following corollary.
\begin{corollary}\label{cor:gradMethod2}
	Suppose that $(s_k,\alpha_k)_{k\in \NN}$ satisfy condition  ({\bf C3}) and that $f$ is convex, then
	\begin{description}
		\item[(i)] $F(x_{k+1})+C\|x_k-x_{k+1}\|^2\leq F(x_k), \, \forall k\in \N.$
		\item [(ii)] There exists $\omega_{k+1}\in \partial F(x_{k+1})$ such that
		$$\|\omega_{k+1}\|\leq B\|x_k-x_{k+1}\|,\,\forall k\in \N.$$
	\end{description}
	where $C$ is given in Lemma \ref{lem:stepSize3} and $B = \frac{3}{\alpha_-}$.
\end{corollary}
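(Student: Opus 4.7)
The corollary is essentially a collation of results already established in the excerpt, so my plan is to combine them directly without any new analytic work.

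For part (i), I would invoke Proposition \ref{descentCD2}, which gives, for each $k \in \N$, $F(x_k) - F(x_{k+1}) \geq c_k \|x_k - x_{k+1}\|^2$ with $c_k = \frac{1}{\alpha_k} - \frac{L}{2}\bigl(\frac{1}{1-Ls_k} - \frac{s_k}{\alpha_k}\bigr)^2$. Since $(s_k, \alpha_k)$ satisfy condition (\textbf{C3}), Lemma \ref{lem:stepSize3} yields $c_k \geq C$, and part (i) follows.

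For part (ii), I would first note that condition (\textbf{C3}) implies condition (\textbf{C}), so Proposition \ref{condGra} is applicable: there exists $u_{k+1} \in \partial g(x_{k+1})$ with $\|u_{k+1} + \nabla f(x_{k+1})\| \leq b_k \|x_k - x_{k+1}\|$. Setting $\omega_{k+1} := u_{k+1} + \nabla f(x_{k+1})$ puts $\omega_{k+1} \in \partial F(x_{k+1})$ by the sum rule (\ref{suru}). It then remains to bound $b_k$. The bound $b_k \leq B = 3/\alpha_-$ is precisely the elementary inequality displayed immediately before the statement of the corollary: under (\textbf{C3}) one has $\alpha_k \leq \frac{2}{L} - 2 s_k - (1 - Ls_k)Ls_k^2 \leq \frac{2}{L} - 2s_k$, hence $L\alpha_k + (1-Ls_k)^2 \leq 2(1-Ls_k) + (1-Ls_k)^2 = (1-Ls_k)(3-Ls_k)$, which upon dividing by $\alpha_k(1-Ls_k)$ gives $b_k \leq (3 - Ls_k)/\alpha_k \leq 3/\alpha_-$.

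There is really no obstacle here: the two nontrivial ingredients (the quadratic descent constant $C$ and the step-size regime ensuring $c_k \geq C$) were the content of Proposition \ref{descentCD2} and Lemma \ref{lem:stepSize3}, and the subgradient bound was the content of Proposition \ref{condGra} together with the routine algebraic estimate recalled above. The proof amounts to pointing at these three statements and writing one line for each part.
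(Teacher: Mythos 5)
Your proposal is correct and follows exactly the route the paper intends: part (i) is Proposition \ref{descentCD2} combined with Lemma \ref{lem:stepSize3}, and part (ii) is Proposition \ref{condGra} together with the bound $b_k \leq (3-Ls_k)/\alpha_k \leq 3/\alpha_-$ displayed just before the corollary (your algebraic verification of that bound matches the paper's "we can check" computation). Nothing is missing.
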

We now consider the complexity for EEG method under the nonsmooth KL inequality in the form of a \textit{small prox} result as in \cite{eb&kl}. First, we recall some definitions from \cite{eb&kl}. Let $0<r_0:=F(x_0)<\bar r$, we assume that $F$ has the KL property on $[0<F<\bar r]$ with desingularizing function $\f\in \cali K(\bar r)$.

Set $\beta_0:=\f(r_0)$ and consider the function $\psi:=(\varphi\vert_{[0,r_0]})^{-1}:[0,\beta_0]\to [0,r_0]$, which is increasing and convex. We add the assumption that $\psi'$ is Lipschitz continuous (on $[0,\beta_0]$) with constant $\ell>0$ and $\psi'(0)=0$.

Set
$$\zeta:=\frac{\sqrt{1+2\ell \,C \,B^{-2}}-1}{\ell}.$$
Starting from $\beta_0$, we define the sequence $(\beta_k)_{k\in \N}$ by
\begin{align}
\beta_{k+1}:&=\argmin\left\{\psi(u)+\frac{1}{2\zeta} (u-\beta_k)^2:u\geq 0\right\}\notag\\
&=\prox_{\zeta\psi} (\beta_k).\notag
\end{align}
It is easy to prove that $\beta_k$ is decreasing and converges to zero. By continuity, $\lim\limits_{k\to\infty}\psi(\beta_k)=0$.

Now, applying the result of \cite[Theorem 17]{eb&kl}, we have the complexity of EEG method in the form of a \textit{small prox} result.
\begin{theorem}[Complexity of EEG method]
Let $(x_k)_{k\in \N}$ be a sequence generated by EEG method. Assume that $f$ is convex and $(s_k, \alpha_k)_{k\in \NN}$ satisfy condition ({\bf C3}). Then, the sequence $(x_k)_{k\in \N}$ converges to $x^*\in\argmin F$, and
	$$\sum_{i=1}^{\infty} \|x_k-x_{k+1}\|<\infty,$$
	moreover,
	\begin{align}
	& F(x_k)-F^*  \leq  \psi(\beta_k),  \quad\forall k\geq 0,\notag\\
	&\|x_k-x^*\|  \leq  \frac{B}{C} \beta_k+\sqrt{\frac{\psi(\beta_{k-1})}{C}}, \quad\forall k\geq 1,\notag 
	\end{align}
	where $B$ and $C$ are given in Corollary \ref{cor:gradMethod2}.	
\end{theorem}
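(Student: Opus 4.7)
My plan is to reduce the statement to a direct invocation of the abstract \emph{small-prox} result \cite[Theorem~17]{eb&kl}. That theorem takes as input an iterative scheme whose iterates satisfy both a quadratic decrease of the objective and a subgradient bound linear in the step length, and requires the objective to satisfy the KL property with a desingularizing function whose inverse has sufficient regularity. Each of these ingredients has already been prepared in the preceding material, so essentially no new estimate is needed; only a careful identification of constants.

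The first step is to extract the two abstract structural properties from Corollary~\ref{cor:gradMethod2}: under condition ({\bf C3}) and convexity of $f$, the EEG iterates satisfy for every $k \in \N$
\begin{align*}
F(x_{k+1}) + C\|x_k-x_{k+1}\|^2 & \leq F(x_k), \\
\mathrm{dist}\bigl(0,\partial F(x_{k+1})\bigr) & \leq B\|x_k-x_{k+1}\|,
\end{align*}
with $B = 3/\alpha_-$ and $C$ as in Lemma~\ref{lem:stepSize3}. These match exactly the descent-plus-gradient-estimate pair that \cite{eb&kl} imposes on an abstract sequence. The KL data align as well: $F$ is KL on $[0<F<\bar r]$ with desingularizing function $\varphi$, and the preamble to the theorem supplies $\psi = (\varphi|_{[0,r_0]})^{-1}$ convex, increasing, with Lipschitz derivative on $[0,\beta_0]$ and $\psi'(0) = 0$. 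The parameter $\zeta = (\sqrt{1+2\ell C B^{-2}}-1)/\ell$ is precisely the scaling prescribed in \cite{eb&kl} so that the proximal recursion $\beta_{k+1} = \prox_{\zeta\psi}(\beta_k)$ is well defined and decreases to zero.

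Applying \cite[Theorem~17]{eb&kl} with these identifications delivers in one shot the finite length property $\sum \|x_k-x_{k+1}\| < \infty$, hence the convergence of $(x_k)$ to some $x^\ast \in \crit F$, together with both quantitative bounds $F(x_k) - F^\ast \leq \psi(\beta_k)$ and $\|x_k - x^\ast\| \leq (B/C)\beta_k + \sqrt{\psi(\beta_{k-1})/C}$. The upgrade from $x^\ast \in \crit F$ to $x^\ast \in \argmin F$ is immediate from convexity of $F = f+g$, which forces $\crit F = \argmin F$. The main, and essentially only, obstacle is the bookkeeping required to see that our constants $B$ and $C$ play the exact roles of the abstract Lipschitz-type constants in \cite{eb&kl}; once this identification is made, the conclusion is a direct citation rather than a new argument.
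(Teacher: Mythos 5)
Your proposal is correct and matches the paper's own treatment: the paper likewise obtains this theorem by verifying that Corollary~\ref{cor:gradMethod2} supplies the descent and subgradient-bound constants $C$ and $B$, that the KL/$\psi$/$\zeta$/$\beta_k$ setup in the preamble matches the hypotheses of \cite[Theorem 17]{eb&kl}, and then citing that result directly. Your additional remark that convexity upgrades $\crit F$ to $\argmin F$ is consistent with the paper's conclusion.
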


\section{Numerical Experiment }\label{sec4}
In this section, we compare the EEG method with standard algorithms in numerical optimization: Forward-Backward and FISTA. We describe the problem of interest, details about exact line search in this context and numerical results.
\subsection{$\ell_1$ Regularized Least Squares}
We let $A \in \RR^{p \times n}$ be a real matrix, $b \in \RR^n$ be a real vector and $\lambda > 0$ be a scalar, all of them given and fixed. Following the notations of the previous section, we define $f \colon x \mapsto \frac{1}{2} \|Ax - b\|_2^2$ and $g \colon x \mapsto \lambda \|x\|_1$ (the sum of absolute values of the entries). With these notations, the optimization problem ({\bf P}) becomes
\begin{align}
\label{eq:L1LeastSquares}
\min_{x \in \RR^n}\;& \frac{1}{2}\|Ax - b\|_2^2 + \lambda \|x\|_1.
\end{align}
Solutions of problem of the form of (\ref{eq:L1LeastSquares}) (as well as many extensions) are extensively used in statistics and signal processing \cite{tibshirani1996regression,chen1999atomic}. For this problem, we introduce the proximal gradient mapping, a specialization of the proximal gradient step to problem (\ref{eq:L1LeastSquares}). This is the main building block of all the algorithms presented in the numerical experiment.
\begin{align}
\label{eq:proxMapping}
\begin{array}{clcl}
p \colon& \RR^n \times \RR_+ &\mapsto& \RR^n\\
&(x, s)&\mapsto& S_{s\lambda}(x - s \nabla f(x)).
\end{array}
\end{align}
where $S_a$ ($a \in \RR_+$) is the soft-thresholding operator which acts coordinatewise and satisfies for $i = 1, 2\ldots,n$
\begin{align*}
[S_{a}(x)]_i =
\begin{cases}
0, & \text{if } |x_i| \leq a.\\
x_i - a {\rm sign}(x_i), &\text{otherwise}.
\end{cases}
\end{align*}
\subsection{Exact Line Search}
One intuition behind Extragradient-Method for optimization is the use of an additional iteration as a guide or a scout to provide an estimate of the gradient that better suits the geometry of the problem. This should eventually translate to taking larger steps leading to faster convergence. In order to evaluate Extragradient-Method, we need a mechanism which would allow us to take larger steps when this is beneficial. One such mechanism is exact line search. This strategy is not widely used because of its computational overhead. In this section, we briefly describe a strategy which allows to perform exact line search efficiently in the context of $\ell_1$-regularized least squares. As far as we know, this approach has not been described in the literature. Furthermore, this strategy may be extended to more general least squares problems with nonsmooth regularizers. For the rest of this section, we assume that $x \in \RR^n$ is fixed. We heavily rely on the two simple facts:
\begin{itemize}
	\item The mapping $s \to p(x, s)$ is continuous and piecewise affine.
	\item The objective function $x \mapsto f(x) + g(x)$ is continuous and piecewise quadratic.
\end{itemize}
We consider the following function
\begin{align*}
q_x \colon \RR_+ &\to \RR\\
\alpha &\to f(p(x, \alpha)) + g(p(x, \alpha)).
\end{align*}
It can be deduced from the properties of $f$, $g$ and $p$ that $q_x$ is continuous and piecewise quadratic. In classical implementation of proximal splitting methods, the step-size parameter $\alpha$ is a well chosen constant which depends on the problem, or alternatively it is estimated using backtracking. The alternative which we propose is to choose the step-size parameter $\alpha$ minimizing $q_x$. Since $q_x$ is a one dimensional piecewise quadratic function, then we only need to know its expression between the values of $\alpha$ which constitute breakpoints where the quadratic expression of the function $q_x$ changes, \textit{i.e.} points where $q_x$ is not differentiable.

The nonsmooth points of $q_x$ are given by the following set
\begin{align*}
\mathcal{D}_x = \left\{\frac{x_i}{\frac{\partial f(x)}{\partial x_i} - \lambda}, \frac{x_i}{\frac{\partial f(x)}{\partial x_i} + \lambda} \right\}_{i=1}^n \cap \RR_+
\end{align*}
and correspond to limiting values for which coordinates of $p(x, \alpha)$ are null. We assume that the elements of $\mathcal{D}_x$ are ordered nondecrasingly (letting potential ties appear several times). The comments that we have made so far lead to the following.
\begin{itemize}
	\item $\mathcal{D}_x$ contains no more than $2n$ elements.
	\item Given $x$ and $\lambda$, computing $\mathcal{D}_x$ is as costly as computing $\nabla f$.
	\item $q_x$ is quadratic between two consecutive elements of $\mathcal{D}_x$.
\end{itemize}
In order to minimize $q_x$, the only task that should be performed is to keep track of its value (or equivalently of its quadratic expression) between consecutive elements of $\mathcal{D}_x$. Here, we can use the fact that elements of $\mathcal{D}_x$ corresponds to values of $\alpha$ for which one coordinate of $p(x, \alpha)$ goes to zero or becomes active (non-zero). A careful implementation of the minimization of $q_x$ amounts to sort the values in $\mathcal{D}_x$, placing them in increasing order, keeping track of the corresponding quadratic expression and the minimal value. We provide a few details for completeness.
\begin{itemize}
	\item The vector $d_x(s) := \left( \frac{\partial [p(x,s)]_i}{\partial s} \right)_{i=1}^n \in \RR^n$ is constant between consecutive elements of $\mathcal{D}_x$. Furthermore the elements of $\mathcal{D}_x$ (counted with multiple ties) corresponds to value of $\alpha$ for which a single coordinate of $d_x(s)$ is modified.
	\item Suppose that $\alpha_1 < \alpha_2$ are two consecutive elements of $\mathcal{D}_x$. Then for all $\alpha \in [\alpha_1, \alpha_2]$, letting $d_x(\alpha) = d$ on this segment, we have $p(x, \alpha) = p(x, \alpha_1) +(\alpha - \alpha_1)d$, hence,
	\begin{align*}
	&\frac{1}{2}\|Ap(x, \alpha) - b\|_2^2 + \lambda \|p(x, \alpha)\|_1 \\
	=\;&\frac{1}{2}\|Ap(x, \alpha_1) - b\|_2^2 + \lambda \|p(x, \alpha_1)\|_1\\
	&+ \frac{(\alpha - \alpha_1)}{n} \left\langle Ad, Ax - b \right\rangle+ \frac{(\alpha-\alpha_1)^2}{2n} \|Ad\|_2^2 + \lambda (\alpha- \alpha_1) \left\langle \bar{d}, d\right\rangle,\\
	\end{align*}
where $\bar{d} \in \RR^p$ is a vector which depends on the sign pattern of $p(x, \alpha_1)$ and $d$.
\item For $\alpha = \alpha_2$, the sign pattern of $p(x, \alpha_2)$ and the corresponding value of $d$  and $\bar{d}$ (for the next interval) are modified only at a single coordinate, the same for the three of them. In other words, updating the quadratic expression of $q_x$ at $\alpha_2$ only requires the knowledge of this coordinate, the value of the corresponding column in $A$ and can be done by computing inner products in $\RR^p$. This requires $O(p)$ operations.
	\item Given these properties, we can perform minimization of $q_x$ by an active set strategy, keeping track only of the sign pattern of $p(x,\alpha)$, the value of $\left\langle\bar{d},d\right\rangle$, the value of $Ad$, $Ap(x, \alpha) - b$ and $\|p(x, \alpha)\|_1$  which cost is of the order of $O(p)$. This should not be repeated more than $2n$ times.
\end{itemize}
Using this active set procedure provides the quadratic expression of $q_x$ for all intervals represented by consecutive values in $\mathcal{D}_x$. From these expressions, it is not difficult to compute the global minimum of $q_x$. The overall cost of this operation is of the order of $O(np)$ plus the cost of sorting $2n$ elements in $\RR$. This is comparable to the cost of computing the gradient of $f$. Hence in this specific setting, performing exact line search does not add much overhead in term of computational cost compared to existing step-size strategies.

\begin{figure}[t]
	\centering
	\includegraphics[width=\textwidth]{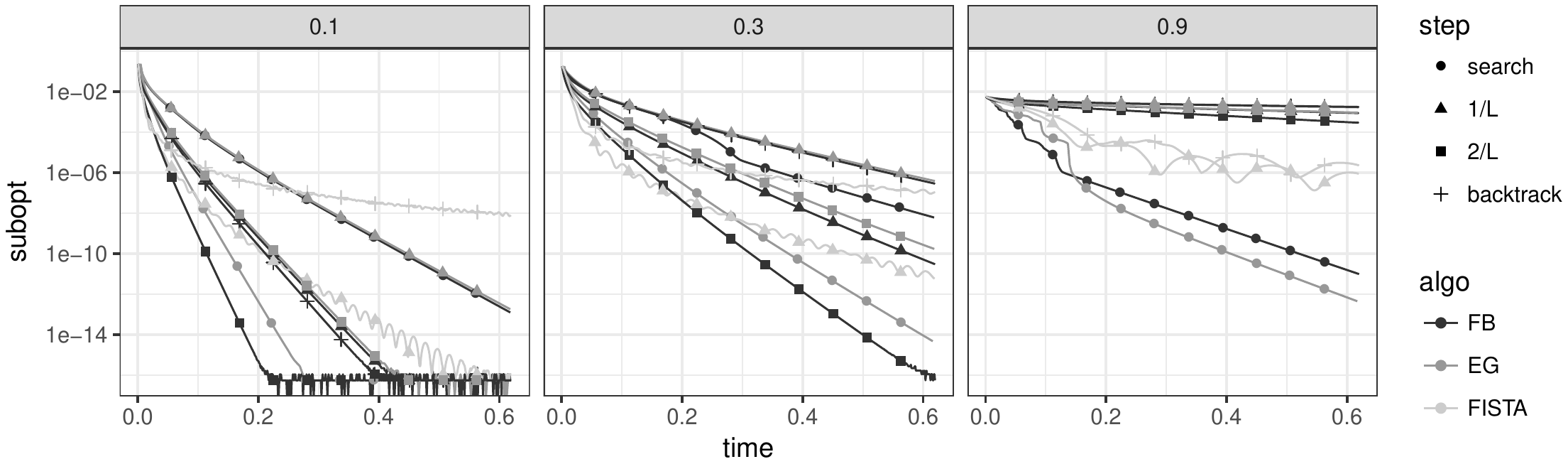}
	\caption{Suboptimality ($F(x_k) - F^*$) as a function of time for simulated $\ell_1$ regularized least squares data. FB stands for Forward-Backward and EG for Extra-Gradient. The color is related to the algorithm used and the dots are related to the step size used. Different windows show different values of the parameter $\delta$ (see the main text for a precise description). On the left we have a well conditioned problem and on the right the conditioning is much worse. For $\delta = 0.1,\,0.3,\,0.9$, the condition number of the matrix $A$ are approximately $6$, $15$ and $300$ respectively.}
	\label{fig:computTime}
\end{figure}

\subsection{Simulation and Results}
We generate a matrix $A$ and vector $b$ using the following process.
\begin{itemize}
	\item Set $n = 600$ and $p = 300$.
	\item Set $A = D X$ where $X$ has standard Gaussian independent entries and $D$ is a diagonal matrix which $i$-th diagonal entry is $\frac{1}{i^\delta}$ where $\delta$ is a positive parameter controlling the good conditioning of the matrix $A$ (the smaller $\delta$, the better).
	\item Choose $b$ with independant Gaussian entries.
	\item We set $\lambda = 1/n\simeq 0.001$.
\end{itemize}
We compare the forward-backward splitting algorithm, FISTA \cite{BT08} and the proposed extragradient method with different step size rules ($L$ is the Lipschitz constant of $f$ computed from the singular values of $A$).
\begin{itemize}
	\item A step of size $1/L$.
	\item A step of size $2/L$.
	\item A step given by backtracking line search (see e.g. \cite{BT08}). The original guess for $L$ is chosen to be $1$ and the multiplicative parameter is $1.2$.
	\item A step given by exact line search as described in the previous section.
\end{itemize}
For the extragradient method, we always choose $s=1/L$ and determine $\alpha$ by the chosen step-size rule. For FISTA algorithm, we do not implement the $2/L$ and exact line search step size rules as they produce diverging sequences.
The exact line search active set procedure is implemented in compiled \texttt{C} code in order keep a reasonable level of efficiency compared to linear algebra operations which have efficient implementations. The algorithms are initialized at the origin. We keep track of decrease of the objective value, the iteration counter $k$ and the total time spent since initialization. The iteration counter is related to analytical complexity while the total time spent is related to the arithmetical complexity (see the introduction in \cite{nes} for more details). Comparing algorithms in term of analytical complexity does not reflect the fact that iterations are more costly for some of them compared to others so we only focus on arithmetical complexity which in our case is roughly proportional to computational time.

Computational times for a generic LASSO problem are presented in Figure \ref{fig:computTime} for $\delta = 0.1,\,0.3,\,0.9$. The main comments are as follows:
\begin{itemize}
	\item For well conditioned problems, the forward-backward algorithm with step size $2/L$ performs the best. This is not the case for the less well conditioned problem where exact line search method shows some advantage.
	\item The extragradient method with exact line search performs reasonably well, independently of the conditioning of the problem.
	\item FISTA algorithm is outperformed by other methods in terms of asymptotic convergence. Furthermore,  FISTA's performance is very sensitive to step-size tuning.
\end{itemize}

This experiment illustrates that exact line search can improve performances for ill-conditioned problems and that the proposed extragradient method is able to take advantage of it, independently of the problem's conditioning. This observation is based on a ``generic'' instance of the LASSO problem. Further experiments on real data are required to confirm generality of the observation. This is a matter of future research.

\section{Conclusions}
In this paper, we presented an extension of extragradient method, EEG, and used it to tackle the problem of minimizing the sum of two functions. Under step size conditions, we showed that EEG is a first order descent method. By using the KL inequality, we obtained the convergence of the sequence produced by EEG method and estimated the complexity of EEG method via the small-prox method.  In the convex setting, we obtained a classical sublinear convergence rate for the objective function value.  Finally, we described an exact line search strategy for the $\ell_1$ regularized least square problem and conducted numerical comparisons with existing algorithms on a generic instance of the LASSO problem.

\paragraph{Acknowledgement of Support and Disclaimer}
This work is sponsored by a grant from the Air Force Office of Scientific Research, Air Force Material Command (grant number FA9550-15-1-0500). Any opinions, findings and conclusions or recommendations expressed in this material are those of the authors and do not necessarily reflect the views of the United States Air Force Research Laboratory. The collaboration with Emile Richard mostly took place during his postdoctoral stay in Mines ParisTech, Paris, France in 2013.

The authors would like to thank Professor J\'er\^ome Bolte for his suggestions, the associate editor and anonymous referee for helpful remarks which helped improve the quality of this manuscript. 
\bibliographystyle{unsrt}
\bibliography{arxiv_11}

\begin{thebibliography}{10}

\bibitem{CP}
Patrick~Louis Combettes and Jean~Christophe Pesquet.
\newblock Proximal splitting methods in signal processing.
\newblock In Heinz~H Bauschke, Regina Burachik, Patrick~Louis Combettes, Veit
  Elser, D~Russell Luke, and Henry Wolkowicz, editors, {\em Fixed-point
  algorithms for inverse problems in science and engineering}, volume~49, pages
  185--212. Springer, 2011.

\bibitem{tibshirani1996regression}
Robert Tibshirani.
\newblock Regression shrinkage and selection via the lasso.
\newblock {\em Journal of the Royal Statistical Society. Series B
  (Methodological)}, 58(1):267--288, 1996.

\bibitem{BT08}
Amir Beck and Marc Teboulle.
\newblock A fast iterative shrinkage-thresholding algorithm for linear inverse
  problems.
\newblock {\em SIAM journal on imaging sciences}, 2(1):183--202, 2009.

\bibitem{Waj}
Patrick~Louis Combettes and Val{\'e}rie Wajs.
\newblock Signal recovery by proximal forward-backward splitting.
\newblock {\em Multiscale Modeling \& Simulation}, 4(4):1168--1200, 2005.

\bibitem{korpe}
GM~Korpelevich.
\newblock The extragradient method for finding saddle points and other
  problems.
\newblock {\em Matecon}, 12:747--756, 1976.

\bibitem{censor}
Yair Censor, Aviv Gibali, and Simeon Reich.
\newblock The subgradient extragradient method for solving variational
  inequalities in {H}ilbert space.
\newblock {\em Journal of Optimization Theory and Applications},
  148(2):318--335, 2011.

\bibitem{Svai}
Renato Monteiro and Benar Svaiter.
\newblock Complexity of variants of {T}seng's modified forward--backward
  splitting and {K}orpelevich's methods for hemivariational inequalities with
  applications to saddle-point and convex optimization problems.
\newblock {\em SIAM Journal on Optimization}, 21(4):1688--1720, 2011.

\bibitem{LuoTseng}
Zhi~Quan Luo and Paul Tseng.
\newblock Error bounds and convergence analysis of feasible descent methods: a
  general approach.
\newblock {\em Annals of Operations Research}, 46(1):157--178, 1993.

\bibitem{attbol}
Hedy Attouch and J{\'e}r{\^o}me Bolte.
\newblock On the convergence of the proximal algorithm for nonsmooth functions
  involving analytic features.
\newblock {\em Mathematical Programming}, 116(1):5--16, 2009.

\bibitem{AttBolSva}
Hedy Attouch, J{\'e}r{\^o}me Bolte, and Benar~Fux Svaiter.
\newblock Convergence of descent methods for semi-algebraic and tame problems:
  proximal algorithms, forward--backward splitting, and regularized
  {G}auss--{S}eidel methods.
\newblock {\em Mathematical Programming}, 137(1-2):91--129, 2013.

\bibitem{BST}
J{\'e}r{\^o}me Bolte, Shoham Sabach, and Marc Teboulle.
\newblock Proximal alternating linearized minimization for nonconvex and
  nonsmooth problems.
\newblock {\em Mathematical Programming}, 146(1-2):459--494, 2014.

\bibitem{eb&kl}
J{\'e}r{\^o}me Bolte, Trong~Phong Nguyen, Juan Peypouquet, and Bruce~W Suter.
\newblock From error bounds to the complexity of first-order descent methods
  for convex functions.
\newblock {\em Mathematical Programming}, 165(2):471--507, 2017.

\bibitem{Loja63}
Stanis{\l}aw {\L}ojasiewicz.
\newblock Une propri{\'e}t{\'e} topologique des sous-ensembles analytiques
  r{\'e}els.
\newblock {\em Les {\'e}quations aux d{\'e}riv{\'e}es partielles}, 117:87--89,
  1963.

\bibitem{Kur98}
Krzysztof Kurdyka.
\newblock On gradients of functions definable in o-minimal structures.
\newblock {\em Annales de l'institut Fourier}, 48(3):769--783, 1998.

\bibitem{BolDanLew1}
J{\'e}r{\^o}me Bolte, Aris Daniilidis, and Adrian Lewis.
\newblock The {{\L}}ojasiewicz inequality for nonsmooth subanalytic functions
  with applications to subgradient dynamical systems.
\newblock {\em SIAM Journal on Optimization}, 17(4):1205--1223, 2007.

\bibitem{BolDanLewShi07}
J{\'e}r{\^o}me Bolte, Aris Daniilidis, Adrian Lewis, and Masahiro Shiota.
\newblock Clarke subgradients of stratifiable functions.
\newblock {\em SIAM Journal on Optimization}, 18(2):556--572, 2007.

\bibitem{Rockafellar}
Ralph~Tyrell Rockafellar.
\newblock {\em Convex analysis}.
\newblock Princeton University Press, 1972.

\bibitem{BauCom}
Heinz~H Bauschke and Patrick~Louis Combettes.
\newblock {\em Convex analysis and monotone operator theory in {H}ilbert
  spaces}, volume 408.
\newblock Springer Science \& Business Media, 2011.

\bibitem{nes}
Yurii Nesterov.
\newblock {\em Introductory lectures on convex optimization: A basic course},
  volume~87.
\newblock Springer Science \& Business Media, 2013.

\bibitem{BolDanLeyMaz}
J{\'e}r{\^o}me Bolte, Aris Daniilidis, Olivier Ley, and Laurent Mazet.
\newblock Characterizations of {{\L}}ojasiewicz inequalities: subgradient
  flows, talweg, convexity.
\newblock {\em Transactions of the American Mathematical Society},
  362(6):3319--3363, 2010.

\bibitem{chen1999atomic}
Scott~Shaobing Chen, David~L Donoho, and Michael~A Saunders.
\newblock Atomic decomposition by basis pursuit.
\newblock {\em SIAM review}, 43(1):129--159, 2001.

\end{thebibliography}
\end{document}